\newtheorem{theorem}{Theorem}[section]
\numberwithin{equation}{section}
\newtheorem{proposition}[theorem]{Proposition}
\newtheorem{corollary}[theorem]{Corollary}
\theoremstyle{remark}
\newtheorem{example}[equation]{Example}
\newtheorem{remark}[theorem]{Remark}
\newtheorem{conjecture}[equation]{Conjecture}
\newcommand{\mb}{\mathbb}
\newcommand{\ml}{\mathcal}
\newcommand{\mr}{\mathrm}
\def\C{{\mathbb{C}}}
\def\mL{{\mathbb{L}}}
\def\P{{\mathbb{P}}}
\def\Q{{\mathbb{Q}}}
\def\Z{{\mathbb{Z}}}
\def\Ch{{\rm Ch}}
\def\sp{{\rm SP}}
\def\Griff{{\rm Griff}}
\def\mZ{{\mathcal{Z}}}
\newcounter{elno}
\begin{document}
\author{Jin Cao and Wenchuan Hu}
\title{Topological and Geometric filtration for products}
\date{\today}

\address{
Jin Cao\\
Yau Mathematical Sciences Center,
Tsinghua University, Beijing, China
}
\email{caojin@mail.tsinghua.edu.cn}

\address{Wenchuan Hu,
School of Mathematics,
Sichuan University,
Chengdu,
China
}
\email{huwenchuan@gmail.com}


\begin{abstract}
 We show that the Friedlander-Mazur conjecture holds for the product of an elliptic curve with some smooth projective variety of dimension $3$. Moreover, we show that the Friedlander-Mazur conjecture is stable under a surjective map. As applications, we show that  the Friedlander-Mazur conjecture holds uniruled threefolds and unirational varieties up to  certain range.
\end{abstract}
\maketitle
\pagestyle{myheadings}
 \markright{Lawson homology}

\tableofcontents

\section{Introduction}

In this paper, all varieties are defined over the complex number field $\C$.
Let $X$ be a complex projective variety of dimension $n$.




The \emph{Lawson homology}
$L_pH_k(X)$ of $p$-cycles for a projective variety is defined by
$$L_pH_k(X) := \pi_{k-2p}({\mathcal Z}_p(X)) \quad for\quad k\geq 2p\geq 0,$$
where ${\mathcal Z}_p(X)$ is the space of algebraic $p$-cycles on $X$ provided with a natural topology (see
\cite{Friedlander1}, \cite{Lawson1}). It has been extended to define for a quasi-projective  variety by Lima-Filho (see \cite{Lima-Filho})
and Chow motives (see \cite{Hu-Li}). For the general background, the
reader is referred to Lawson' survey paper \cite{Lawson2}.

In \cite{Friedlander-Mazur}, Friedlander and Mazur showed that there
are  natural transformations, called \emph{Friedlander-Mazur cycle class maps}
\begin{equation}\label{eq01}
\Phi_{p,k}:L_pH_{k}(X)\rightarrow H_{k}(X)
\end{equation}
for all $k\geq 2p\geq 0$.

Recall that Friedlander and Mazur constructed a map called the $s$-map $$s:L_pH_k(X)\to L_{p-1}H_k(X)$$ such that the cycle class map
$\Phi_{p,k}=s^p$ (see \cite{Friedlander-Mazur}).
Explicitly, if $\alpha\in L_pH_k(X)$ is represented by the homotopy class of a continuous map  $f:S^{k-2p}\to \mZ_p(X)$, then
$\Phi_{p,k}(\alpha)=[f\wedge S^{2p}]$, where $S^{2p}=S^2\wedge\cdots\wedge S^2$ denotes the $2p$-dimensional topological sphere.

Set
{$$
\begin{array}{llcl}
&L_pH_{k}(X)_{hom}&:=&{\rm ker}\{\Phi_{p,k}:L_pH_{k}(X)\rightarrow H_{k}(X)\};\\
&L_pH_{k}(X, \Q)&:=&L_pH_{k}(X)\otimes\Q;\\
&T_pH_{k}(X)&:=&{\rm Image}\{\Phi_{p,k}:L_pH_{k}(X)\rightarrow
H_{k}(X)\};\\
&T_pH_{k}(X,{\mathbb{Q}}) &:=&T_pH_{k}(X)\otimes {\mathbb{Q}}.
\end{array}
 $$}

For simplicity, the map $ \Phi_{p,k}\otimes{\Q}:L_pH_{k}(X)_{\Q}\rightarrow H_{k}(X,\Q)$
is also denoted by $\Phi_{p,k}$.

It was shown in \cite[\S 7]{Friedlander-Mazur} that the subspaces
$T_pH_k(X,{\mathbb{Q}})$ form a decreasing filtration (called the \emph{topological filtration}):
$$\cdots\subseteq T_pH_k(X,{\mathbb{Q}})\subseteq T_{p-1}H_k(X,{\mathbb{Q}})
\subseteq\cdots\subseteq
T_0H_k(X,{\mathbb{Q}})=H_k(X,{\mathbb{Q}})$$ and
$T_pH_k(X,{\mathbb{Q}})$ vanishes if $2p>k$.

Denote by
$C_pH_k(X,{\mathbb{Q}})\subseteq H_k(X,{\mathbb{Q}})$ the
$\mathbb{Q}$-vector subspace of $H_k(X,{\mathbb{Q}})$ spanned by
the images of correspondence homomorphisms $\phi_Z: H_{k-2p}(Y,{\mathbb{Q}})\rightarrow
H_k(X,{\mathbb{Q}})$, as $Y$ ranges through all smooth projective varieties of dimension $k-2p$
and $Z$ ranges all algebraic cycles on $Y\times X$ equidimensional over $Y$ of relative dimension $p$.

Denote by
$G_pH_k(X,{\mathbb{Q}})\subseteq H_k(X,{\mathbb{Q}})$ the
$\mathbb{Q}$-vector subspace of $H_k(X,{\mathbb{Q}})$ generated by
the images of mappings $H_k(Y,{\mathbb{Q}})\rightarrow
H_k(X,{\mathbb{Q}})$, induced from all morphisms $Y\rightarrow X$ of
varieties of dimension $\leq k-p$.

The subspaces $G_pH_k(X,{\mathbb{Q}})$ also form a decreasing
filtration (called the \emph{geometric filtration}):
$$\cdots\subseteq G_pH_k(X,{\mathbb{Q}})\subseteq G_{p-1}H_k(X,{\mathbb{Q}})
\subseteq\cdots\subseteq G_0H_k(X,{\mathbb{Q}})\subseteq
H_k(X,{\mathbb{Q}})$$


It was shown by Friedlander and Mazur that
\begin{equation}\label{eq20}
T_pH_k(X,{\mathbb{Q}})=C_pH_k(X,{\mathbb{Q}})\subseteq G_pH_k(X,{\mathbb{Q}}) 
\end{equation}
holds for any smooth projective variety $X$ and $k\geq 2p\geq0$.

One can also define the Hodge filtration on
$H_k(X,{\Q})$ as follows: Denote by
$\tilde{F}_pH_k(X,{\Q})\subset H_k(X,{\Q})$ to be the
maximal sub-Mixed Hodge structure of span $k-2p$.(See \cite{Friedlander-Mazur} for example.)  The sub-${\Q}$ vector spaces $\tilde{F}_pH_k(X,{\Q})$ form a decreasing filtration of sub-Hodge structures:
$$\cdots\subset \tilde{F}_pH_k(X,{\Q})\subset \tilde{F}_{p-1}H_k(X,{\Q})
\subset\cdots\subset \tilde{F}_0H_k(X,{\Q})\subset H_k(X,{\Q})$$ and $\tilde{F}_pH_k(X,{\Q})$ vanishes if $2p>k$. This is
the homological version of Hodge filtration. The we have:
\begin{equation}\label{eq21}
T_pH_k(X,{\mathbb{Q}})=C_pH_k(X,{\mathbb{Q}})\subseteq G_pH_k(X,{\mathbb{Q}}) \subseteq \tilde{F}_pH_k(X,{\Q})
\end{equation}

Friedlander and Mazur proposed the following conjecture which closely relates Lawson homology theory to  the Grothendieck Standard conjecture B in the algebraic cycle theory.
\begin{conjecture}[Friedlander-Mazur conjecture, \cite{Friedlander-Mazur}]\label{conj9.1}
Let $X$ be a smooth projective variety. Then one has
$$
T_pH_k(X,{\mathbb{Q}})= G_pH_k(X,{\mathbb{Q}})
$$
for $k\geq 2p\geq0$.
\end{conjecture}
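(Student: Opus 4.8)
\smallskip
\noindent\emph{A strategy toward Conjecture~\ref{conj9.1}.}
Since \eqref{eq20} already gives $T_pH_k(X,\Q)=C_pH_k(X,\Q)\subseteq G_pH_k(X,\Q)$, the entire content of the conjecture is the reverse inclusion $G_pH_k(X,\Q)\subseteq T_pH_k(X,\Q)$. By definition $G_pH_k(X,\Q)$ is generated by the images of $g_\ast:H_k(Y,\Q)\to H_k(X,\Q)$ as $g:Y\to X$ ranges over all morphisms from varieties of dimension $\le k-p$. Using resolution of singularities together with the surjectivity of rational homology pushforward along a smooth proper cover, one may first replace each such $Y$ by a smooth projective model, so that the problem reduces to the following: for every smooth projective $Y$ equipped with a morphism $g:Y\to X$ and $\dim Y=d\le k-p$, show that $g_\ast\bigl(H_k(Y,\Q)\bigr)\subseteq T_pH_k(X,\Q)$.

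The next reduction makes the target intrinsic to $Y$. Proper pushforward of $p$-cycles yields a continuous map $\mZ_p(Y)\to\mZ_p(X)$, hence a homomorphism $g_\ast:L_pH_k(Y)\to L_pH_k(X)$ that is compatible with the cycle class maps, $\Phi_{p,k}\circ g_\ast=g_\ast\circ\Phi_{p,k}$. Therefore, if a class $\alpha\in H_k(Y,\Q)$ already lies in $T_pH_k(Y,\Q)$, its pushforward $g_\ast\alpha$ lies in $T_pH_k(X,\Q)$. The conjecture for all $X$ would thus follow from the purely internal assertion
\begin{equation}\label{eq:star}
T_pH_k(Y,\Q)=H_k(Y,\Q)\quad\text{for every smooth projective }Y\text{ with }\dim Y=d\le k-p.\tag{$\star$}
\end{equation}
Because the topological filtration is decreasing, $T_pH_k\subseteq T_{p'}H_k$ for $p\ge p'$, it is enough to establish \eqref{eq:star} for the extremal value $p=k-d$; writing $q=k-d\ge0$, this amounts to proving that the cycle class map $\Phi_{q,d+q}:L_qH_{d+q}(Y)\to H_{d+q}(Y,\Q)$ is rationally surjective for every smooth projective $Y$ of dimension $d$. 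Since $\dim Y=d$ forces $G_qH_{d+q}(Y,\Q)=H_{d+q}(Y,\Q)$, the assertion \eqref{eq:star} is nothing but the conjecture for $Y$ itself in this extremal bidegree, so the preceding steps reduce the general statement to its extremal cases.

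To attack \eqref{eq:star} I would invoke the Lefschetz theory. Since $d+q\ge d$, the hard Lefschetz theorem identifies $H_{d+q}(Y,\Q)$ with the lower group $H_{d-q}(Y,\Q)$ through the $q$-fold intersection with a hyperplane class. The Lefschetz operator is itself algebraic: intersection of a $p$-cycle with a general hyperplane section is induced by an algebraic correspondence and hence descends to an operator $L_pH_k(Y)\to L_{p-1}H_{k-2}(Y)$ on Lawson homology that is compatible, through the cycle class maps, with cap product by the hyperplane class on $H_\ast(Y,\Q)$. The plan is to express an arbitrary class of $H_{d+q}(Y,\Q)$ by means of iterated linear sections and then to lift the resulting Lefschetz relations to the cycle groups $L_qH_{d+q}(Y)$, thereby producing the required preimage under $\Phi_{q,d+q}$.

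The decisive obstacle appears at exactly this point, and it is the same obstruction that gives the conjecture its interest. While the Lefschetz operator $L$ lifts to the cycle spaces, its inverse---the operator $\Lambda$ realizing the hard Lefschetz isomorphism $H_{d-q}(Y,\Q)\xrightarrow{\ \sim\ }H_{d+q}(Y,\Q)$---is not known to be induced by any algebraic correspondence, still less by one acting compatibly on Lawson homology. Producing such a cycle-space lift of $\Lambda$ is precisely a Lawson-homology refinement of Grothendieck's standard conjecture~B, and it is what \eqref{eq:star} demands. I therefore do not expect an unconditional proof in this generality; the realistic route---and the one taken for the special classes of varieties treated in this paper---is to verify \eqref{eq:star} only for those $Y$ whose Lawson homology is computable, for instance through a K\"unneth-type decomposition of $L_\ast H_\ast$ for products of low-dimensional varieties, where the inverse Lefschetz operator can be written down explicitly and seen to be algebraic.
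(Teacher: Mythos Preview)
The statement you are addressing is a \emph{conjecture}; the paper does not prove it in general, and neither do you---your write-up is explicitly a strategy and ends by naming the obstruction rather than overcoming it. As a strategic analysis your reductions are sound: the passage to smooth $Y$ via resolution, the compatibility of proper pushforward with $\Phi_{p,k}$, and the reduction to the extremal bidegree $p=k-\dim Y$ are all correct, and your identification of the remaining step with a cycle-theoretic lift of $\Lambda$ is precisely the content of the known equivalence (Friedlander \cite{Friedlander2} in one direction, Beilinson \cite{Beilinson} in the other, recalled in the paper immediately after the conjecture) between Friedlander--Mazur for all varieties and Grothendieck's standard conjecture~B for all varieties. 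So what you have written is a correct explanation of why the conjecture is open, not a proof.

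Where your outline and the paper's actual arguments part ways is in how the special cases are handled. Your assertion $(\star)$ asks, for each subvariety $Y$ contributing to $G_pH_k(X,\Q)$, that \emph{every} class in $H_k(Y,\Q)$ lift to $L_pH_k(Y)$; in general this is as hard as the conjecture itself, and for an arbitrary threefold $Y$ sitting inside a fourfold $X$ it is exactly the unknown case. The paper avoids this by exploiting the ambient product structure of $X$: rather than lifting inside $Y$, it replaces $Y$ (denoted $V$ there) by a birational model that is equidimensional over one of the factors, and then uses Lemma~\ref{Lemma4.8} to rewrite $i_\ast(H_k(V,\Q))$ as the action of an explicit algebraic correspondence on a smooth variety of dimension $k-2p$, landing directly in $C_pH_k(X,\Q)=T_pH_k(X,\Q)$. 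This bypasses any appeal to $\Lambda$ on $V$ and is what makes the product cases in Proposition~\ref{Prop4.7} and Theorem~\ref{Thm3.10} go through unconditionally.
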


It has been shown in \cite{Friedlander2} that the Grothendieck Standard conjecture B holds for all smooth projective varieties  implies that
the Friedlander-Mazur conjecture holds for all smooth projective varieties. The inverse implication has been shown in \cite{Beilinson}.
However, it is still an open problem at this moment whether  the Grothendieck Standard conjecture B holds for $X$ and the Friedlander-Mazur conjecture
 holds for $X$ are equivalent or not for a given smooth projective variety $X$.

The Friedlander-Mazur conjecture holds for smooth projective varieties of dimension less than or equal to two but it remains open for threefolds in general.
 However, it has been verified  for some cases in dimension three or above.
For example, it holds for cellular varieties for which the Lawson homology are shown to be the same as the singular homology (see \cite{Lawson1}, \cite{Lima-Filho});
it holds for general abelian varieties (see \cite{Friedlander2}) or abelian varieties for which the  generalized Hodge conjecture holds (see\cite{Abdulali});
it holds for smooth projective varieties for which the Chow groups in rational coefficients are isomorphic to the corresponding singular homology groups in rational coefficients
by using the technique of decomposition of diagonal (cf. \cite{Voineagu}).
It was also shown to hold for threefold $X$ with $h^{2,0}(X)=0$ (see \cite{Hu}).
 It also holds for \emph{any} abelian threefold.
This and a survey  of these materials, including its relation to the Grothendieck Standard conjecture B and the Generalized Hodge conjecture, can be found in the appendix of \cite{Hu2}.

The main purpose in this paper is to show that the Friedlander-Mazur conjecture holds for a sequence of projective varieties in low dimensions.
In Theorem \ref{Thm3.5} the Friedlander-Mazur conjecture is shown to hold for the product of  a smooth projective curve and a smooth projective surface. In Theorem \ref{Thm2.7}, the Friedlander-Mazur conjecture is shown to hold for the product of  an elliptic curve and a smooth projective variety of dimension three which satisfies $H^{3,0}=0$, the Grothendieck standard conjecture B and generalized Hodge conjectures. 
 We also show that the
 Friedlander-Mazur conjecture is stable under the surjective morphism (see Proposition \ref{Prop6.2}).
We show that the  Friedlander-Mazur conjecture  holds for uniruled threefolds and smooth unirational varieties of arbitrary dimension in certain range of indices. We also observe that the motivic invariants
 can not be used to distinguish  rational varieties and the unirational varieties.

\section{Friedlander-Mazur conjecture}

In this section we will show that the Friedlander-Mazur conjecture holds for the product of  a smooth projective curve and a smooth projective surface and the product of some projective threefold and an ellipit curve. For a smooth projective threefold $X$,
the surjection of  the cycle class map $\Phi_{1,4}\otimes \Q: L_1H_4(X)_{\Q} \to H_4(X,\Q)$  is  equivalent to a positive answer to
many questions including the Grothendieck standard conjecture B  in the algebraic cycle theory. 

\begin{proposition}\label{Prop4.1}
Let $X = C \times S$, where $C$ is a smooth projective curve and $S$ is a smooth projective surface. Then $\Phi_{1,4}: L_1H_4(X, \Q)  \to H_4(X,\Q)$ is surjective.
\end{proposition}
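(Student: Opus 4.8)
The plan is to combine the Künneth decomposition of $H_4(X,\mathbb{Q})$ with the multiplicative structure on Lawson homology. Since $C$ is a curve, Künneth gives
$$H_4(X,\mathbb{Q})=\bigoplus_{i+j=4}H_i(C,\mathbb{Q})\otimes H_j(S,\mathbb{Q}),$$
in which the only nonzero summands correspond to $(i,j)\in\{(0,4),(1,3),(2,2)\}$. It then suffices to show that each of these three subspaces is contained in $T_1H_4(X,\mathbb{Q})=\mathrm{Image}(\Phi_{1,4})$.

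The tool is the external (Cartesian) product of cycles $\mathcal{Z}_p(C)\times\mathcal{Z}_q(S)\to\mathcal{Z}_{p+q}(C\times S)$: being biadditive and continuous, it induces pairings $L_pH_k(C)\otimes L_qH_l(S)\to L_{p+q}H_{k+l}(X)$ which are compatible, through the cycle class maps, with the exterior product $H_k\otimes H_l\to H_{k+l}$ on singular homology; I would invoke this structural fact rather than reprove it. For the summand $H_2(C)\otimes H_2(S)$ I would use the pairing $L_1H_2(C)\otimes L_0H_2(S)\to L_1H_4(X)$: here $\Phi_{1,2}\colon L_1H_2(C)\to H_2(C)$ is an isomorphism sending the fundamental cycle to $[C]$, and $\Phi_{0,2}\colon L_0H_2(S)\to H_2(S)$ is the Dold--Thom isomorphism, so the image of $\Phi_{1,4}$ already contains $[C]\otimes H_2(S,\mathbb{Q})$; note that the potentially transcendental part of $H_2(S)$ is absorbed by $L_0$ while the product index $p+q$ remains equal to $1$. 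For $H_1(C)\otimes H_3(S)$ I would use $L_0H_1(C)\otimes L_1H_3(S)\to L_1H_4(X)$, with $L_0H_1(C)\cong H_1(C,\mathbb{Q})$ by Dold--Thom and $\Phi_{1,3}\colon L_1H_3(S)_{\mathbb{Q}}\to H_3(S,\mathbb{Q})$ surjective. For $H_0(C)\otimes H_4(S)$ I would either use $L_0H_0(C)\otimes L_1H_4(S)\to L_1H_4(X)$ with $\Phi_{1,4}\colon L_1H_4(S)_{\mathbb{Q}}\to H_4(S,\mathbb{Q})$ surjective, or simply observe that this one-dimensional summand is spanned by the class of the divisor $\{c\}\times S\subset X$ and hence lies in $T_2H_4(X,\mathbb{Q})\subseteq T_1H_4(X,\mathbb{Q})$.

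The only non-formal ingredient is the surjectivity of $\Phi_{1,3}$ (and, for the uniform argument, $\Phi_{1,4}$) for the surface $S$, and this is exactly where the hypothesis that a factor is a surface is used. Since the Friedlander--Mazur conjecture is known in dimension $\le 2$, one has $T_1H_j(S,\mathbb{Q})=G_1H_j(S,\mathbb{Q})$; and for $j\ge 3$ the identity morphism $\mathrm{id}_S\colon S\to S$ has source of dimension $2\le j-1$, so it is admissible in the definition of $G_1H_j(S,\mathbb{Q})$ and forces $G_1H_j(S,\mathbb{Q})=H_j(S,\mathbb{Q})$. Hence $\Phi_{1,j}\colon L_1H_j(S)_{\mathbb{Q}}\to H_j(S,\mathbb{Q})$ is surjective for $j=3,4$, which is what the two middle cases require. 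Assembling the three reductions, every Künneth component of $H_4(X,\mathbb{Q})$ lies in $\mathrm{Image}(\Phi_{1,4})$, and the proposition follows.

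The step I expect to be the main obstacle is the bookkeeping around the external product: one must check that the Cartesian product of cycle spaces descends to a pairing on Lawson homology with the stated index behaviour and is compatible with the iterated $s$-maps defining $\Phi$, so that the exterior product of two cycle classes computes the cycle class of the product cycle. Conceptually, though, the argument works only because each factor $H_i(C)$ is controlled in Lawson homology by a point ($i=0$), the full group $L_0$ ($i=1$), or the fundamental class of $C$ ($i=2$), so that the product index $p+q$ never exceeds $1$, while the surface factors $H_j(S)$ are captured either by $L_0$ (Dold--Thom) or, in the degrees $j=3,4$ that remain, by $L_1$ via the known surface case of the conjecture.
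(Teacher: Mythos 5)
Your argument is essentially the paper's first proof of this proposition: the same K\"unneth decomposition of $H_4(C\times S,\mathbb{Q})$ into the three components $(k,l)\in\{(0,4),(1,3),(2,2)\}$, the same external-product pairings $L_pH_k(C)\otimes L_qH_l(S)\to L_1H_4(X)$ with the same index assignments, and the same (asserted, not reproved) compatibility with the cycle class maps. The only minor difference is that you obtain the surjectivity of $\Phi_{1,3}$ and $\Phi_{1,4}$ for the surface $S$ from the known surface case of the Friedlander--Mazur conjecture together with the definition of the geometric filtration, whereas the paper cites Friedlander's computation $L_1H_j(S)\cong H_j(S)$ for $j=3,4$ directly; both justifications are valid.
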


\begin{proof}[The first proof of Proposition \ref{Prop4.1}]
Since $X=C\times S$, we have a map $\mZ_p(C)\wedge \mZ_q(S)\to \mZ_1(C\times S)=\mZ_1(X)$ for nonnegative integers $p,q$ such that $p+q=1$, where
$\wedge$ is the smash map. This map induces a map of Lawson homology $L_pH_k(C)\otimes L_qH_l(S)\to  L_1H_4(X)$ for nonnegative integers $k\geq 2p,l\geq 2q$, where $k+l=4$.
Moreover, this map commutes with the natural transformation $\Phi_{*,*}:L_*H_*(-)\to H_*(-)$  and hence we have the following commutative diagram
\begin{equation}\label{equ4.1}
\xymatrix{
L_pH_k(C, \Q)\otimes L_qH_l(S, \Q)\ar[r]\ar[d]& L_1H_4(X, \Q)\ar[d]\\
H_k(C, \Q)\otimes H_l(S, \Q)\ar[r]& H_4(X, \Q).\\
}
\end{equation}

Now for $\alpha\in H_4(X,\Q)$, by K\"{u}nneth formula we can find $c_{k}\in H_k(C,\Q) , s_{l}\in H_l(S,\Q)$ such that
$$\sum_{k+l=4} c_{k}\otimes s_{l}=\alpha.$$
Since $k+l=4$ and $0\leq k\leq 2$, there are the following cases:

\begin{enumerate}
\item[(1)] $k=0,l=4$. Since $\Phi_{0,0}:L_0H_0(C, \Q)\cong H_0(C,\Q)$ and  $\Phi_{1,4}:L_1H_4(S, \Q)\cong H_4(S,\Q)$ (see \cite{Friedlander1}) are isomorphisms, we have
$$L_0H_0(C, \Q)\otimes L_1H_4(S, \Q)\cong H_0(C,\Q)\otimes H_4(S,\Q).$$
Hence there exist $c_{0}'\in L_0H_0(C,\Q) , s_{4}'\in L_1H_4(S, \Q)$ such that
 $\Phi_{0,0}(c_{0}')=c_{0}$ and $\Phi_{1,4}(s_{4}')=s_{4}$. By construction, $c_{0}'\otimes s_{4}'$ maps to an element (still denote by $c_{0}'\otimes s_{4}'$) in $L_1H_4(C\times S, \Q)$ which maps to
 the K\"{u}nneth component $c_{0}\otimes s_{4}$ of $\alpha$.
\item[(2)] $k=1,l=3$.  Since $\Phi_{0,1}:L_0H_1(C, \Q)\cong H_1(C,\Q)$ (Dold-Thom Theorem) and  $\Phi_{1,3}:L_1H_3(S, \Q)\cong H_3(S,\Q)$ (see \cite{Friedlander1}) are isomorphisms, we have
$L_0H_1(C, \Q)\otimes L_1H_3(S, \Q)\cong H_1(C,\Q)\otimes H_3(S,\Q)$. Hence there exist $c_{1}'\in L_0H_1(C, \Q) , s_{3}'\in L_1H_3(S, \Q)$ such that
 $\Phi_{0,1}(c_{1}')=c_{1}$ and $\Phi_{1,3}(s_{3}')=s_{3}$. By construction, $c_{1}'\otimes s_{3}'$ maps to an element (still denote by $c_{1}'\otimes s_{3}'$) in $L_1H_4(C\times S, \Q)$ which maps to
 the K\"{u}nneth component $c_{1}\otimes s_{3}$ of $\alpha$.
\item[(3)] $k=2,l=2$. Since $\Phi_{1,2}:L_1H_2(C)_{\Q}\cong H_2(C,\Q)$  and  $\Phi_{0,2}:L_0H_2(S, \Q)\cong H_2(S,\Q)$ (Dold-Thom Theorem) are isomorphisms, we have
$$L_1H_2(C, \Q)\otimes L_0H_2(S, \Q)\cong H_2(C,\Q)\otimes H_2(S,\Q).$$
Hence there exist $c_{2}'\in L_1H_2(C, \Q) , s_{2}'\in L_0H_2(S, \Q)$ such that
 $\Phi_{1,2}(c_{2}')=c_{2}$ and $\Phi_{0,2}(s_{2}')=s_{2}$. By construction, $c_{2}'\otimes s_{2}'$ maps to an element (still denote by $c_{2}'\otimes s_{2}'$) in $L_1H_4(C\times S, \Q)$ which maps to
 the K\"{u}nneth component $c_{2}\otimes s_{2}$ of $\alpha$.
\end{enumerate}

Therefore, by the commutative diagram in Equation \eqref{equ4.1} we get an element $\sum_{k+l=4} c_{k}'\otimes s_{l}'\in L_1H_4(X, \Q)$ such that
$$\Phi_{1,4}\big(\sum_{k+l=4} c_{k}'\otimes s_{l}'\big)=\alpha.$$
This completes the proof of the surjectivity of $\Phi_{1,4}$.
\end{proof}

\begin{remark}
The idea of the proof of Proposition \ref{Prop4.1} can be traced to \cite[p.195]{Lawson2}, where the topological, geometric and Hodge filtrations of the product of $n$ elliptic curves are studied.
We remark that the idea of the proof there for the coincidence of the topological, geometric and transcendental Hodge filtrations works \emph{only} for the cases $k\geq p+n$, where $k$ is the homological dimension
and $p$ is the dimension of the cycles.
\end{remark}

\begin{remark}
From this proof we see that $\Phi_{1,4}: L_1H_4(X) \to H_4(X)$ is surjective since $L_pH_k(C)\cong H_k(C)$ for all $k\geq 2p\geq 0$,  $L_1H_4(S)\cong H_4(S)$,  $\Phi_{1,3}:L_1H_3(S)\cong H_3(S)$ and $\Phi_{0,2}:L_0H_2(S, \Q)\cong H_2(S)$.
 Hence torsion elements in $H_4(C\times S)$ come from those of $L_1H_4(C\times S)$.
\end{remark}

\begin{proof}[The second proof of Proposition \ref{Prop4.1}]
Let $X$ be any smooth projective threefold such the Grothendieck standard conjecture B holds. That is, the inverse $\Lambda$ of the Lefschetz operator $L:H^i(X,\Q)\to H^{i+2}(X,\Q)$ is an algebraic operator.
From the argument in the proof $(iv)\Rightarrow (i)$ of the proposition in \S 2.2 in \cite{Beilinson}, one observes that $L_1H_k(X, \Q)\to H_k(X,\Q)$ is surjective for all $k\geq 4$. Now taking $X=C\times S$,
we see that $X$ is smooth projective threefold satisfying   Grothendieck standard conjecture B  since the conjecture holds in dimension less than three and is stable under products (see \cite{Kleiman}).
Therefore, $L_1H_4(X, \Q)\to H_4(X,\Q)$ is surjective.
\end{proof}

\begin{corollary}\label{Cor4.3}
Let $X$ be the product of a smooth projective curve $C$ and a smooth projective surface $S$, then the Friedlander-Mazur conjecture holds for $X$.
\end{corollary}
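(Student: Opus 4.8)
The plan is to reduce the conjecture for $X=C\times S$ to Proposition~\ref{Prop4.1}, by checking that for a smooth projective threefold the only non-formal instance of the conjecture besides $(p,k)=(1,4)$ is $(p,k)=(1,3)$, and that $(1,3)$ holds unconditionally.

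By \eqref{eq20} one always has $T_pH_k(X,\Q)\subseteq G_pH_k(X,\Q)$, so for $0\le 2p\le k\le 6$ only the inclusion $G_pH_k(X,\Q)\subseteq T_pH_k(X,\Q)$ needs proof. First the formal cases. If $p=0$ then $T_0H_k(X,\Q)=H_k(X,\Q)$ by Dold--Thom. If $k=2p$ then both sides equal the $\Q$-span of the classes of $p$-dimensional subvarieties of $X$ (for $G_pH_{2p}$ by definition, for $T_pH_{2p}=C_pH_{2p}$ by \eqref{eq20}); this covers $(1,2),(2,4),(3,6)$. Since $L_3H_6(X)=\pi_0(\mZ_3(X))\cong\Z$ maps isomorphically onto $H_6(X)$, one gets $T_3H_6(X,\Q)=H_6(X,\Q)$, hence $T_1H_6=T_2H_6=H_6$ by monotonicity of the topological filtration; this covers $(1,6),(2,6)$. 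For $(2,5)$ and $(1,5)$ the product-map argument of the first proof of Proposition~\ref{Prop4.1} applies: by Künneth $H_5(X,\Q)=\bigl(H_1(C,\Q)\otimes H_4(S,\Q)\bigr)\oplus\bigl(H_2(C,\Q)\otimes H_3(S,\Q)\bigr)$, and both summands lie in the image of $\Phi_{2,5}$ through $L_0H_1(C)\otimes L_2H_4(S)\to L_2H_5(X)$ and $L_1H_2(C)\otimes L_1H_3(S)\to L_2H_5(X)$, using $L_0H_1(C)_\Q\cong H_1(C,\Q)$ (Dold--Thom), $L_1H_2(C)_\Q\cong H_2(C,\Q)$ and $L_2H_4(S)_\Q\cong H_4(S,\Q)$ (trivially), and $L_1H_3(S)_\Q\cong H_3(S,\Q)$ (by \cite{Friedlander1}); thus $T_2H_5=T_1H_5=H_5$. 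What remains is $(1,4)$ and $(1,3)$.

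Case $(1,4)$ is Proposition~\ref{Prop4.1}: $\Phi_{1,4}$ is surjective, and since $G_1H_4(X,\Q)=H_4(X,\Q)$ (because $\dim X=3\le 4-1$), this gives $T_1H_4=G_1H_4$. Case $(1,3)$ I would prove for an arbitrary smooth projective threefold $X$. It suffices to show that for every morphism $f\colon W\to X$ with $\dim W\le 2$ one has $f_*H_3(W,\Q)\subseteq T_1H_3(X,\Q)=C_1H_3(X,\Q)$; after resolving we may take $W$ a smooth projective surface, and if $\dim f(W)\le 1$ then $f_*H_3(W)=0$. Fix a smooth ample curve section $j\colon Y\hookrightarrow W$. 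By the Lefschetz hyperplane theorem $j^*\colon H^1(W,\Q)\to H^1(Y,\Q)$ is injective; its adjoint with respect to the polarizations is therefore a surjection $H^1(Y,\Q)\twoheadrightarrow H^1(W,\Q)\cong H_3(W,\Q)$, and it is algebraic because it can be written as $\Lambda_W\circ j_*$, with $j_*$ a Gysin map and $\Lambda_W$ the inverse Lefschetz operator of the surface $W$, which is algebraic since the standard conjecture~B holds in dimension $\le 2$ (see \cite{Kleiman}). Hence this surjection is realized by an algebraic correspondence $\Xi\subseteq Y\times W$, which may be taken of pure dimension $2$ and, each component dominating $Y$, equidimensional over $Y$ of relative dimension $1$. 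Composing $\Xi$ with the graph of $f$ gives a cycle $Z\subseteq Y\times X$, again equidimensional of relative dimension $1$ over $Y$, with $\phi_Z=f_*\circ\Xi_*\colon H_1(Y,\Q)\to H_3(X,\Q)$; since $\Xi_*$ surjects onto $H_3(W,\Q)$, the image of $\phi_Z$ is $f_*H_3(W,\Q)$. Thus $f_*H_3(W,\Q)\subseteq C_1H_3(X,\Q)$, and varying $f$ gives $G_1H_3(X,\Q)\subseteq T_1H_3(X,\Q)$, i.e.\ equality. Together with the formal cases and Proposition~\ref{Prop4.1} this proves the Friedlander--Mazur conjecture for $X=C\times S$.

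The crux is case $(1,3)$. The elementary device of the first proof of Proposition~\ref{Prop4.1} — the product maps $L_0H_1(C)\otimes L_1H_2(S)\to L_1H_3(X)$ — does \emph{not} suffice here, since $L_1H_2(S)=\pi_0(\mZ_1(S))$ is only the Néron--Severi group of $S$ and so reaches only the summand $H_1(C,\Q)\otimes\mathrm{NS}(S)_\Q$ of $H_1(C,\Q)\otimes H_2(S,\Q)$, whereas $G_1H_3(X,\Q)$ genuinely contains classes built from the transcendental part of $H^2(S,\Q)$ (already when $C\times S$ is a product of three elliptic curves, a case where the conjecture is known). Hence the Lefschetz-type argument is needed in earnest, and the points to be checked with care are the use of the standard conjecture~B for the auxiliary surface $W$ (invoked only in dimension $2$, where it holds) and the relative-dimension count for the composed correspondence $Z$.
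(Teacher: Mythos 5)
Your argument is correct, but it is organized quite differently from the paper's. The paper's proof of Corollary \ref{Cor4.3} is two lines: it quotes \cite{Hu} for the fact that for an arbitrary smooth projective threefold the Friedlander--Mazur conjecture is already known in every bidegree except $(p,k)=(1,4)$, and then combines Proposition \ref{Prop4.1} with $G_1H_4(X,\Q)=H_4(X,\Q)$. You instead re-derive that reduction by hand: the formal cases ($p=0$, $k=2p$, $k=6$), the cases $(2,5)$ and $(1,5)$ by the K\"unneth/product-map device of the first proof of Proposition \ref{Prop4.1}, and --- the real content of your write-up --- the case $(1,3)$, which the paper obtains from \cite[Prop.~1.15]{Hu}. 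Your $(1,3)$ argument (resolve $W$, take a smooth curve section $j\colon Y\hookrightarrow W$, use $\Lambda_W\circ j_*\circ j^*=\mathrm{id}$ on $H^1(W,\Q)$ with $\Lambda_W$ algebraic because conjecture B holds for surfaces, discard the components of the resulting correspondence that do not dominate $Y$ --- they act trivially on $H_1(Y,\Q)$ for degree reasons --- and compose with the graph of $f$, noting that over a curve every dominant $2$-dimensional component is automatically equidimensional of relative dimension one) is sound, and it is close in spirit to the machinery the paper develops later in Proposition \ref{Prop4.7} and Lemma \ref{Lemma4.8}: realize $f_*$ by a cycle equidimensional over a smooth base of lower dimension produced via weak/hard Lefschetz. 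The paper's route buys brevity at the price of an external citation; yours buys a self-contained proof of the threefold reduction with the inputs (Dold--Thom, Friedlander's codimension-one computations, B for surfaces, Deligne's mixed Hodge theory for the resolution step) made explicit, and your closing remark correctly identifies why the naive product argument cannot reach $(1,3)$, since $L_1H_2(S)$ only sees $\mathrm{NS}(S)_\Q$. Two small points to tighten: invoke the Lefschetz hyperplane theorem for a \emph{very} ample (or a multiple of an ample) section, and justify replacing a singular $W$ by a resolution by the weight argument the paper itself uses in Step 2 of Proposition \ref{Prop4.7}.
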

\begin{proof}
Note that for any smooth projective threefold $X$, the Friedlander-Mazur conjecture has been proved to hold except for ``$T_1H_4(X,\Q)=G_1H_4(X,\Q)$" (see \cite{Hu}).
Now $H_4(X,\Q)=G_1H_4(X,\Q)$ holds  for $X$ by the definition of the geometric filtration and $T_1H_4(X,\Q)$ is exactly the image of $\Phi_{1,4}:L_1H_4(X, \Q) \to H_4(X,\Q)$.
Hence the surjectivity of $\Phi_{1,4}:L_1H_4(X, \Q)\to H_4(X,\Q)$ by Proposition \ref{Prop4.1} implies that ``$T_1H_4(X,\Q)=G_1H_4(X,\Q)$" for $X=C\times S$.
This completes the proof of the corollary.
\end{proof}

\begin{remark}
As a comparison, the generalized Hodge conjecture is still open even for the product of three smooth projective curves.
\end{remark}

\begin{remark}\label{Thm3.5}
Let $X$ be a smooth projective varieties of dimension three. If the Grothendieck standard conjecture B holds for $X$, then Friedlander-Mazur conjecture holds for $X$. This has been prove in \cite{Friedlander2}. Alternatively, by \cite[Remark 1.13]{Hu}, the Friedlander-Mazur conjecture  $$T_pH_k(X,{\mathbb{Q}})= G_pH_k(X,{\mathbb{Q}})$$ holds for $X$ except for $p=1,k=4$. By  the same argument as in the second proof of Proposition \ref{Prop4.1}, we obtain that $L_1H_4(X, \mathbb{Q})\to H_4(X,{\mathbb{Q}})$ is surjective under Grothendieck standard conjecture B for $X$. This completes the proof.
\end{remark}

Our next result is to provide some examples for smooth projective varieties of dimensional four whose FM conjecture holds.

\begin{theorem} \label{Thm2.7}
Let $X$ be the product of a smooth three-dimensional projective variety $Y$ with a smooth projective curve $C$ of genus $1$. Assume that $h^{3,0}(Y) = 0$, $G_1H_3(Y, \Q) = \widetilde{F}_1H_3(Y, \Q)$ and the Grothendieck standard conjecture B holds for $Y$. Then the Friedlander-Mazur conjecture holds for $X$.
\end{theorem}
\begin{proof}
For $p=0$, by the Dold-Thom theorem and the Weak Lefschetz theorem, we get the statement  ``$T_pH_k(X,\Q)=G_pH_k(X,\Q)$"  for all $k\geq 0$.

For $p=4$, the statement  ``$T_pH_k(X,\Q)=G_pH_k(X,\Q)$" holds trivially from their definitions.

For $p=3$, the statement  ``$T_pH_k(X,\Q)=G_pH_k(X,\Q)$" follows from Friedlander's computation of Lawson homology for codimension one cycles (see \cite{Friedlander1}).

For $p=2$, the statement  ``$T_2H_4(X,\Q)=G_2H_4(X,\Q)$" follows from  \cite[\S 7]{Friedlander-Mazur}; the statement  ``$T_2H_5(X,\Q)=G_2H_5(X,\Q)$" follows from \cite[Prop.1.15]{Hu};
the statement  ``$T_2H_k(X,\Q) = G_2H_k(X,\Q)$"  for $k\geq 6$ follows from  the arguments in the proof $(iv)\Rightarrow (i)$ of proposition in \cite[\S2.2]{Beilinson}.
Another proof of the last statement can be obtained in a similar way from  the first proof of Proposition \ref{Prop4.1}.

For $p=1$, as above, the statement ``$T_1H_2(X,\Q)=G_1H_2(X,\Q)$" follows from \cite[\S 7]{Friedlander-Mazur}; the statement ``$T_1H_3(X,\Q)=G_1H_3(X,\Q)$" follows from \cite[Prop.1.15]{Hu}.
Moreover, the equality
$$``T_1H_k(X,\Q) = G_1H_k(X,\Q)"$$  for $k\geq 5$ follows from  the arguments in the proof $(iv)\Rightarrow (i)$ of Proposition in \cite[\S2.2]{Beilinson} or by a direct check
as that in Proposition \ref{Prop4.1}.

The remain case is  the statement ``$T_1H_4(X,\Q)=G_1H_4(X,\Q)$".  In fact, we have the following commutative diagram:
\begin{small}
\begin{equation} \label{com diag}
\xymatrix{
L_1H_4(Y) \otimes H_0(C) \oplus L_1H_3(Y) \otimes H_1(C) \oplus L_1H_2(Y) \otimes H_2(C)\ar[r]\ar[d]& L_1H_4(X)\ar[d]\\
G_1H_4(Y) \otimes H_0(C) \oplus G_1H_3(Y) \otimes H_1(C) \oplus G_1H_2(Y) \otimes H_2(C)\ar[r]\ar[d]& G_1H_4(X)\ar[d]\\
\widetilde{F}_1(H_4(Y) \otimes H_0(C)) \oplus \widetilde{F}_1(H_3(Y) \otimes H_1(C)) \oplus \widetilde{F}_1(H_2(Y) \otimes H_2(C))\ar[r]& \widetilde{F}_1H_4(X)\\
}
\end{equation}
\end{small}
Note that the first left vertical map is sujective due the Grothendieck standard conjecture B holds for $Y$.
\

Give an element $\alpha \in i_*H_4(V, \Q) \subset G_1H_4(X, \Q)$, where $i: V \to X$ and $V$ is a three-dimensional smooth projective variety. View $\alpha$ as an element in $\widetilde{F}_1H_4(X)$.
\

Consider the Kunneth decomposition of $\alpha$ in the Hodge filtration $H_4(X, \Q)$. Then $\alpha = \alpha_0 + \alpha_1 + \alpha_2$, where $\alpha_i \in H_{4-i}(Y, \Q) \otimes H_i(C, \Q)$. Notice that the standard conjecture holding for $Y$ implies that $L_1H_4(X, \Q) \to G_1H_4(X, \Q) \to H_4(X, \Q)$ is surjective. See the proof of Theorem 2.6.  Also the map $G_1H_2(X, \Q) \to H_2(X, \Q)$ is surjective. Therefore $\alpha_0$ (resp. $\alpha_2$) belongs to $G_1H_4(Y, \Q) \otimes H_0(C, \Q)$ (resp. $G_1H_2(Y, \Q) \otimes H_2(C, \Q)$).
\


We let $\gamma_1, \gamma_2$ be the generators of $H_1(C, \Q)$ such that
\[
\gamma = \frac{1}{2}(\gamma_1 + i \gamma_2) \in H_{-1,0} , \bar{\gamma} =  \frac{1}{2}(\gamma_1 - i \gamma_2) \in H_{0,-1},
\]
where $H_1(C, \mb{C}) = H_{-1,0} \oplus H_{0,-1}$ and we use the homological Hodge index (See \cite[\S 7.2]{Friedlander-Mazur}).
Then there exist $\beta_1, \beta_2 \in H_3(Y, \Q)$ such that $\alpha_1 = \beta_1 \otimes \gamma_1 + \beta_2 \otimes \gamma_2 \in G_1H_4(X, \Q)$ and hence
\begin{equation}
\begin{split}
& \alpha_1 = \beta_1 \otimes \gamma_1 + \beta_2 \otimes \gamma_2 = \beta_1 \otimes (\gamma+\bar{\gamma}) + \beta_2 \otimes \frac{1}{i}(\gamma - \bar{\gamma})\\
= & (\beta_1 - i \beta_2) \otimes \gamma + (\beta_1+i \beta_2) \otimes \bar{\gamma}
\end{split}
\end{equation}
Because $\alpha_1 \in G_1H_4(X, \Q)$ doesn't contain $(-4,0)$ and $(0,-4)$ component,  $(\beta_1 - i \beta_2)^{-3,0} = (\beta_1+i \beta_2)^{0,-3} = 0$. If we set $\beta = \frac{1}{2}(\beta_1 - i \beta_2)$ and $\bar{\beta} = \frac{1}{2}(\beta_1 + i \beta_2)$, then we find that:
\[
\alpha_1 = 2 \beta \otimes \gamma + 2 \bar{\beta} \otimes \bar{\gamma},
\]
where $\beta$ (resp. $\bar{\beta}$) doesn't have $(-3,0)$ (resp. $(0,-3)$)-component. On the other hand, we have $i_*H_4(V, \C) \cap H_{0,-3}(Y) \otimes H_{-1,0}(C) = \{0\}$ because $h^{3,0}(Y) = 0$.
Therefore $\mr{Span}_{\Q}\{\alpha_1\} \otimes \C \in (F_1H_3(Y, \C) \otimes H_1(C, \C)) \cap (H_3(Y, \Q) \otimes H_1(C, \Q))$. Moreover $\mr{Span}_{\Q}\{\beta_1, \beta_2\} \otimes \C = \mr{Span}_{\C}\{\beta, \bar{\beta}\}$ carries a sub Hodge structure of $H_3(Y, \C)$. This implies that $\alpha_1 \in  \mr{Span}_{\C}\{\beta, \bar{\beta}\} \otimes H_1(C, \mb{C}) \subset \widetilde{F}_1(H_3(Y)) \otimes H_1(C, \mb{C})$ and therefore $\alpha_1 \in G_1H_3(Y, \Q) \otimes H_1(C, \Q)$ by our assumption, which further implies that the middle horizontal map in (\ref{com diag}) is surjective. Then $L_1H_4(X) \to G_1H_4(X)$ is surjective, which is equivalent to saying that $T_1H_4(X,\Q)=G_1H_4(X,\Q)$.
\end{proof}

\begin{example}
in \cite{Tankeev}, Tankeev shows that the Grothendieck standard conjecture B folds for all smooth complex projective threefolds of Kodaira dimension small than $3$. Hence via the above theorem and Remark \ref{Thm3.5}, the Friedlander-Mazur conjecture holds for the product of an elliptic curve and any smooth complex projective threefolds of Kodaira dimension small than $3$ satisfying $h^{3,0}(Y) = 0, G_1H_3(Y, \Q) = \widetilde{F}_1H_3(Y, \Q)$. For example, the Friedlander-Mazur conjecture holds for the product of an elliptic curve with a complete intersection of dimension three.
\end{example}

\section{Surjective morphisms}
In this section, we let $\ml{M}$ be the category of Chow motives with rational coefficients. For any smooth projective variety $X$ over the complex number field $\mb{C}$, we denote its Chow motive by $h(X)$.
Recall that Vial shows the following result.
\begin{theorem}[ \cite{Vial2}]\label{Thm7.1}
Let $f: X \to B$ be a surjective morphism of smooth projective varieties over $\C$. Then $\bigoplus^{d_X - d_B}_{i = 0}h(B)(i)$ is a direct summand of $h(X)$.
\end{theorem}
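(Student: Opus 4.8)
\textbf{Proof proposal for Theorem \ref{Thm7.1}.}
The plan is to produce, for a surjective morphism $f : X \to B$ of smooth projective varieties, an explicit splitting of $h(X)$ in the category $\ml{M}$ of rational Chow motives that exhibits $\bigoplus_{i=0}^{d_X-d_B} h(B)(i)$ as a summand. The starting point is the observation that surjectivity of $f$, together with generic smoothness over $\C$, lets us replace $X$ by a convenient model without changing its Chow motive up to the summand we care about: by choosing a smooth complete intersection $\beta \subset X$ of $d_X - d_B$ ample divisors that is generically finite of some degree $e>0$ over $B$, we obtain a correspondence $\Gamma \subset B \times X$ (the image of $\beta$, or rather the cycle $[\beta]$ viewed in $\mr{CH}^{d_B}(B \times X)$) whose composite with its transpose $\Gamma^{t}$ computes as multiplication by a nonzero rational number on $h(B)$. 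This is the mechanism already used in the proof of Proposition \ref{Prop4.7}, Step 3, where a hyperplane section of an equidimensional $V$ over $S_i$ produces exactly such a correspondence; here one iterates it $d_X - d_B$ times.

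First I would set up the $i = 0$ summand. Let $H \in \mr{CH}^{1}(X)$ be the class of a very ample divisor. Intersecting the diagonal-type correspondence with powers of $H$, one builds $\alpha \in \mr{CH}^{d_X}(B \times X)$ and $\alpha' \in \mr{CH}^{d_B}(X \times B)$ with $\alpha' \circ \alpha = \lambda \cdot \Delta_B$ for some $\lambda \in \Q^{\times}$ (the projection formula computation is exactly the one carried out in Lemma \ref{Lemma4.8} and its Claim). Then $p := \tfrac{1}{\lambda}\,\alpha \circ \alpha' \in \mr{CH}^{d_X}(X \times X)$ is an idempotent and the pair $(X, p)$ is isomorphic to $h(B)$; this realizes $h(B) = h(B)(0)$ as a direct summand of $h(X)$. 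For the higher Tate twists, the key input is the hard Lefschetz theorem together with the algebraicity of the relevant operators on a summand: on the motive $h(B)$ already split off, the Lefschetz operator $L = \cdot H|_B$ acts, and iterating $L^{j}$ against $H$-powers on $X$ produces correspondences realizing $h(B)(i)$ for each $0 \le i \le d_X - d_B$ as a summand sitting inside $h(X)$ in complementary cohomological degrees. Concretely, the class $H^{i}$ on the fibers of (a model of) $f$ gives, fiber by fiber, a copy of $\Q(-i)$ in $H^{2i}$ of the fiber, and pushing these classes around via the correspondence $\alpha$ twisted by $H^{i}$ yields mutually orthogonal idempotents $p_0, \dots, p_{d_X - d_B}$ with $(X, p_i) \cong h(B)(i)$.

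The orthogonality $p_i \circ p_j = 0$ for $i \neq j$ is what makes the \emph{direct sum} $\bigoplus_{i=0}^{d_X-d_B} h(B)(i)$ a summand rather than merely each factor separately; I would verify it by a weight/degree count — $p_i$ shifts cohomological degree by $2i$ relative to the $i=0$ piece, so composites of distinct $p_i$'s land in incompatible bidegrees and vanish after the projection-formula bookkeeping — and then set $p = \sum_i p_i$, an idempotent whose image is the desired direct sum. I expect the main obstacle to be precisely the construction and verification of these idempotents in the \emph{Chow} category (not merely in cohomology or in homological motives): one must produce honest algebraic cycles on $X \times X$ with the required composition laws, which is why the generic-smoothness reduction to an equidimensional model over $B$ and the repeated use of the projection formula (as in Lemma \ref{Lemma4.8}) are essential — they convert the cohomological hard-Lefschetz picture into actual correspondences. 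Once the orthogonal family $\{p_i\}$ is in hand, the statement follows immediately from the definition of direct summand in $\ml{M}$.
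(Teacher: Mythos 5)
This theorem is not proved in the paper at all: it is quoted verbatim from Vial \cite{Vial2}, so there is no in-paper argument to compare your sketch against, and I will judge it against the standard (Vial's) proof. Your core mechanism is the right one: set $d=d_X-d_B$, fix an ample class $h\in \Ch^1(X)$, and for each $0\le i\le d$ consider the pair of correspondences $f_i={}^t\Gamma_f\cdot(1_B\times h^i)\colon h(B)(i)\to h(X)$ and $g_i=\Gamma_f\cdot(h^{d-i}\times 1_B)\colon h(X)\to h(B)(i)$; the projection formula gives $g_i\circ f_i=\Delta_{B*}f_*(h^{d})=m\cdot \mr{id}$, where $m=\deg\big(h^d|_{\text{general fibre}}\big)>0$ by ampleness. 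That is the entire content of each individual splitting. In particular no desingularized complete intersection $\beta$, no equidimensional model, and above all no hard Lefschetz or ``algebraicity of the relevant operators'' is needed: the only operator that ever appears is cup product with the algebraic class $h$, never its inverse. Invoking hard Lefschetz here is at best a red herring and at worst suggests the result is conditional on the Lefschetz standard conjecture, which it is not.

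The genuine gap is the orthogonality step. You assert that the idempotents $p_i=\tfrac1m f_i\circ g_i$ are mutually orthogonal and propose to check this ``by a weight/degree count,'' arguing that composites of distinct $p_i$ land in incompatible cohomological bidegrees. This cannot work: the statement is a splitting in the category of \emph{Chow} motives, and a correspondence whose cohomological realization vanishes for degree reasons need not vanish in $\Ch^*(X\times X)_{\Q}$ — killing classes by bidegree is exactly the kind of argument that only proves the homological version. Worse, the orthogonality is simply false for these natural projectors: one computes $g_j\circ f_i=\Delta_{B*}f_*(h^{\,d+i-j})$, which vanishes when $j>i$ because $f_*$ annihilates cycles of dimension $>d_B$ (a Chow-theoretic dimension count, not a cohomological one), but is in general a nonzero correspondence on $B$ when $j<i$. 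What rescues the theorem is that the matrix $\big(g_j\circ f_i\big)_{i,j}$ representing the composite $\bigoplus_i h(B)(i)\to h(X)\to \bigoplus_j h(B)(j)$ is therefore triangular with diagonal entries $m\cdot\mr{id}$, hence invertible in $\mr{End}\big(\bigoplus_i h(B)(i)\big)$; a split injection with a one-sided inverse up to an automorphism of the source already exhibits $\bigoplus_{i=0}^{d}h(B)(i)$ as a direct summand of $h(X)$ (equivalently, one can orthogonalize the $p_i$ inductively by a Gram--Schmidt procedure in the ring of correspondences). Replace the bidegree argument by this dimension count and drop the Lefschetz apparatus, and your sketch becomes Vial's proof.
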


A natural question is that how does the Friedlander-Mazur conjecture behave under  surjective morphisms.  We have the following result which
says that the  Friedlander-Mazur conjecture respects surjective morphisms.

\begin{proposition}\label{Prop6.2}
Let $f: X \to B$ be a surjective morphism of smooth projective varieties over $\C$. If the Friedlander-Mazur conjecture holds for $X$, then it holds for $B$.
\end{proposition}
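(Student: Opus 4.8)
The plan is to deduce the Friedlander--Mazur conjecture for $B$ from the one for $X$ via Vial's Theorem~\ref{Thm7.1}. Taking the $i=0$ summand there, $h(B)$ is a direct summand of $h(X)$ in $\ml{M}$: there are correspondences $\iota\in\Ch_{\dim B}(B\times X)_{\Q}$ and $\rho\in\Ch_{\dim X}(X\times B)_{\Q}$, inducing degree-preserving maps $\iota_*,\rho_*$ on singular homology and --- because Lawson homology and the cycle class maps $\Phi_{p,k}$ extend to $\ml{M}$ (see~\cite{Hu-Li}) --- on the groups $L_pH_k(-)_{\Q}$ as well, all compatibly with $\Phi_{p,k}$, and with $\rho_*\circ\iota_*=\mathrm{id}$ on $H_k(B,\Q)$ for every $k$. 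Since $T_pH_k(B,\Q)\subseteq G_pH_k(B,\Q)$ always holds by~\eqref{eq20}, it remains to prove the reverse inclusion for all $k\geq 2p\geq 0$.

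The auxiliary fact I would establish first is that both filtrations are stable under the action of a degree-preserving algebraic correspondence $\Gamma\in\Ch_{\dim A}(A\times A')_{\Q}$ between smooth projective varieties, i.e.\ $\Gamma_*\big(T_pH_k(A,\Q)\big)\subseteq T_pH_k(A',\Q)$ and $\Gamma_*\big(G_pH_k(A,\Q)\big)\subseteq G_pH_k(A',\Q)$. For $T_pH_k=C_pH_k$ this is immediate, since $T_pH_k(-,\Q)$ is the image of the natural transformation $\Phi_{p,k}$ on $\ml{M}$ and $\Gamma$ carries $p$-cycles to $p$-cycles. For $G_pH_k$ one argues with supports: a class in $G_pH_k(A,\Q)$ has the form $g_*(\delta)$ for some morphism $g\colon Z\to A$ from a variety with $\dim Z\leq k-p$ and some $\delta\in H_k(Z,\Q)$; pulling $\Gamma$ back along $g\times\mathrm{id}_{A'}$ yields $\Gamma_Z\in\Ch_{\dim Z}(Z\times A')_{\Q}$ with $\Gamma_*(g_*\delta)=(\Gamma_Z)_*(\delta)$ by the projection formula and base change, and $(\Gamma_Z)_*(\delta)$ is supported on the closed subvariety $W:=\overline{p_{A'}(\mathrm{supp}\,\Gamma_Z)}\subseteq A'$, which satisfies $\dim W\leq\dim Z\leq k-p$; hence $(\Gamma_Z)_*(\delta)$ lies in the image of $H_k(W,\Q)\to H_k(A',\Q)$, which is contained in $G_pH_k(A',\Q)$ by definition of the geometric filtration.

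I would then assemble these ingredients as follows. Given $\gamma\in G_pH_k(B,\Q)$ with $k\geq 2p\geq 0$, stability of $G$ under $\iota$ gives $\iota_*(\gamma)\in G_pH_k(X,\Q)$; the Friedlander--Mazur conjecture for $X$ gives $G_pH_k(X,\Q)=T_pH_k(X,\Q)$, so $\iota_*(\gamma)\in T_pH_k(X,\Q)$; stability of $T$ under $\rho$ gives $\rho_*(\iota_*(\gamma))\in T_pH_k(B,\Q)$; and $\rho_*\circ\iota_*=\mathrm{id}$ forces $\gamma=\rho_*(\iota_*(\gamma))\in T_pH_k(B,\Q)$. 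Hence $G_pH_k(B,\Q)\subseteq T_pH_k(B,\Q)$, so $G_pH_k(B,\Q)=T_pH_k(B,\Q)$ for all $k\geq 2p\geq 0$, which is the Friedlander--Mazur conjecture for $B$.

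I expect the only genuinely delicate point to be the correspondence-invariance of the geometric filtration. Unlike $T_pH_k=C_pH_k$, the filtration $G_pH_k$ is not defined in correspondence-theoretic terms, so its stability cannot be read off formally and must be proved by the support-and-dimension argument above; and this cannot be bypassed, since the splitting map $\iota$ produced by Theorem~\ref{Thm7.1} is a genuine correspondence (built from a transposed graph and a degree normalization) rather than the pushforward along an honest morphism, so only its action as a correspondence is available. Everything else is routine bookkeeping with the motivic formalism of~\cite{Hu-Li} and with Theorem~\ref{Thm7.1}.
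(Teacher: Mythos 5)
Your proof is correct, and it takes a route that differs in an instructive way from the paper's. Both arguments hinge on Vial's Theorem~\ref{Thm7.1}, but the paper works with the \emph{full} decomposition $\bigoplus_{i=0}^{d_X-d_B}h(B)(i)$ inside $h(X)$, computes $L_pH_k$ of that motive via \cite{Hu-Li}, and then asserts that the equality $T_pH_k=G_pH_k$ for $X$ passes to each motivic summand, from which the $i=0$ summand gives the claim for $B$. You instead use only the retraction $h(B)\xrightarrow{\iota}h(X)\xrightarrow{\rho}h(B)$, $\rho\circ\iota=\mathrm{id}$, and reduce everything to the single lemma that a degree-preserving correspondence $\Gamma$ sends $T_pH_k$ into $T_pH_k$ (formal, by motivic functoriality of $L_pH_k$ and naturality of $\Phi_{p,k}$) and $G_pH_k$ into $G_pH_k$ (by the refined-pullback/support argument bounding $\dim\overline{p_{A'}(\mathrm{supp}\,\Gamma_Z)}\leq k-p$). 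What your approach buys is precisely the point the paper leaves implicit: the geometric filtration is not defined in correspondence-theoretic terms, so the claim that $G_pH_k$ of $X$ decomposes compatibly with the motivic splitting — which the paper's displayed identity $T_pH_k(\bigoplus h(B)(i),\Q)=G_pH_k(\bigoplus h(B)(i),\Q)$ presupposes — really does require your stability lemma (a niveau-filtration argument in the spirit of Vial/Arapura). In that sense your write-up is the more self-contained one; conversely, the paper's summand-by-summand bookkeeping yields the slightly stronger conclusion $T_{p-i}H_{k-2i}(B,\Q)=G_{p-i}H_{k-2i}(B,\Q)$ for all $0\leq i\leq d_X-d_B$ in one stroke, though this adds nothing beyond the case $i=0$ once one quantifies over all $(p,k)$. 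Two small points to polish: justify that the $Z\to A$ appearing in the definition of $G_pH_k$ may be taken smooth (resolve singularities and use surjectivity of $H_k(\widetilde{Z},\Q)\to H_k(Z,\Q)$, as the paper does in Proposition~\ref{Prop4.7}) so that the refined Gysin pullback $\Gamma_Z=(g\times\mathrm{id})^{!}\Gamma$ is available as a cycle class supported on $(g\times\mathrm{id})^{-1}(\mathrm{supp}\,\Gamma)$; and phrase the $T_p$-stability via the functoriality of $L_pH_k$ on $\ml{M}$ rather than via an action on cycle spaces, since a composed correspondence need not remain equidimensional.
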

\begin{proof}
By Theorem 4.7 in \cite{Hu-Li}, we have
 $$
 \begin{array}{ccl}
 L_pH_k(\bigoplus^{d_X - d_B}_{i = 0} h(B)(i), \Q)&=&\bigoplus^{d_X - d_B}_{i = 0} L_pH_k(h(B)(i), \Q)\\
 &=&\bigoplus^{d_X - d_B}_{i = 0} L_{p-i}H_{k-2i}(B, \Q).
 \end{array}
 $$

By Theorem \ref{Thm7.1}, $\bigoplus^{d_X - d_B}_{i = 0} L_{p-i}H_{k-2i}(B)$ is a direct summand of $L_pH_k(X)$, where we note
that the choice of Leschetz motive $\mL$ in \cite{Hu-Li} is the inverse to that in \cite{Vial2}. Since
 the singular homology also respects the direct sum decomposition of motives, the image of the natural transform $\Phi_{p,k}$ on
 each summand of $h(X)$ lies in the corresponding summand of its singular homology.
 Now by assumption, the Friedlander-Mazur conjecture holding for $X$ means $$``T_pH_k(X,\Q) = G_pH_k(X,\Q)"$$
for $k\geq 2p$. Therefore, we have
$$T_pH_k(\bigoplus^{d_X - d_B}_{i = 0} h(B)(i),\Q)=G_pH_k(\bigoplus^{d_X - d_B}_{i = 0} h(B)(i),\Q),$$
that is,
$$\bigoplus^{d_X - d_B}_{i = 0} T_{p-i}H_{k-2i}(B,\Q)= \bigoplus^{d_X - d_B}_{i = 0} G_pH_{k-2i}(B,\Q).$$
Hence $T_{p-i}H_{k-2i}(B,\Q)= G_pH_{k-2i}(B,\Q)$ for each $ 0\leq i\leq d_X-d_B$.
In particular, we have $T_pH_k(B,\Q) = G_pH_k(B,\Q)$ for all $k\geq 2p$.
This completes the proof of the proposition.
\end{proof}
 
\begin{remark}
Let $f: X \to B$ be a surjective morphism of smooth projective varieties over $\C$. If the Grothendieck Standard conjecture holds for $X$, then it holds for $B$.
This can be deduced from \cite[Lemma 4.2]{Arapura}. From the proof of Proposition \ref{Prop6.2}, one can see that  the generalized Hodge conjecture also respects to
 surjective morphisms.

Since $f^*$ commutes with the Lefschetz operator $L:H^k(X)\to H^{k+2}(X)$, we have the following commutative diagram:
$$
\xymatrix{H^k(B)\ar[r]^{\Lambda_B}\ar[d]^{f^*}&H^{k-2}(B)\ar[d]^{f^*}\\
H^k(X)\ar[r]^{\Lambda_X}&H^{k-2}(X).
}
$$
By Theorem \ref{Thm7.1}, $f^*$ on $H^{k-2}(B)$ is injective and its image ${\rm im}(f^*)\subset H^{k-2}(X)$  is  the summand
of $H^{k-2}(X)$. In other words, the assumption that $\Lambda_X$ is algebraic implies that its components are algebraic.
\end{remark}

\section{Unirational and uniruled varieties}
Recall that a smooth projective variety $X$ is called \emph{unirational} if there is a positive integer $n$ such that $f: \mb{P}^n \dashrightarrow X$ is a dominant rational map.
\begin{proposition}\label{prop41}
Let $X$ be a smooth unirational variety of dimension $n$. Then we have $L_1H_k(X)_{hom} \otimes \mb{Q} = 0$ for any integer $k\geq 2$. Furthermore if $k \geq 2\dim X$, then $L_1H_k(X)_{hom} = 0.$
That is, $L_1H_{2n}(X) = \Z$ and  $L_1H_k(X)= 0$ for $k> 2n$. Similar results holds for if 1-cycles are replaced by codimension 2-cycles.

\end{proposition}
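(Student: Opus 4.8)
The plan is to reduce the statement about $X$ to a known statement about projective space $\mb{P}^n$ via the dominant rational map $f:\mb{P}^n\dashrightarrow X$, using the standard trick of resolving the indeterminacy of $f$ and pushing forward cycles. First I would choose a resolution: let $\pi:\widetilde{\mb{P}^n}\to\mb{P}^n$ be a sequence of blow-ups along smooth centers such that $\widetilde{f}:=f\circ\pi:\widetilde{\mb{P}^n}\to X$ becomes a morphism, which is still dominant, hence surjective since $X$ is projective. Because Lawson homology is a blow-up formula object (a blow-up along a smooth center $Z\subset Y$ of codimension $c$ gives $L_pH_k(\mathrm{Bl}_Z Y)\cong L_pH_k(Y)\oplus\bigoplus_{i=1}^{c-1}L_{p-i}H_{k-2i}(Z)$, cf. \cite{Hu-Li} or the projective bundle formula), and since $\mb{P}^n$ is cellular so that $\Phi_{p,k}:L_pH_k(\mb{P}^n)_\Q\xrightarrow{\ \cong\ }H_k(\mb{P}^n,\Q)$ is an isomorphism in all bidegrees, and all the centers appearing in the resolution are again smooth projective varieties of dimension $<n$, I would argue by induction on $n$ (or directly invoke that $\widetilde{\mb{P}^n}$ is a smooth projective rational variety built from cellular pieces) that $\Phi_{1,k}:L_1H_k(\widetilde{\mb{P}^n})_\Q\to H_k(\widetilde{\mb{P}^n},\Q)$ is an isomorphism, i.e. $L_1H_k(\widetilde{\mb{P}^n})_{hom}\otimes\Q=0$ for all $k\geq 2$.

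Next I would transfer this to $X$ via a correspondence argument. The graph $\Gamma_{\widetilde f}\subset\widetilde{\mb{P}^n}\times X$ of the surjective morphism $\widetilde f$ induces push-forward maps $\widetilde f_*$ on Lawson homology compatible with $\widetilde f_*$ on singular homology through $\Phi_{*,*}$; moreover, since $\widetilde f$ is surjective between smooth projective varieties of the same or different dimension, Vial's theorem (Theorem \ref{Thm7.1}) shows $\bigoplus_{i=0}^{d}h(X)(i)$ with $d=\dim\widetilde{\mb{P}^n}-\dim X$ is a direct summand of $h(\widetilde{\mb{P}^n})$, and hence by \cite[Thm.~4.7]{Hu-Li} $L_1H_k(X)_\Q$ is a direct summand of $L_1H_k(\widetilde{\mb{P}^n})_\Q$ compatibly with the cycle class map and the splitting of $H_k(X,\Q)$ inside $H_k(\widetilde{\mb{P}^n},\Q)$. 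Since $L_1H_k(\widetilde{\mb{P}^n})_{hom}\otimes\Q=0$ and the splitting respects $\Phi_{1,k}$ and its kernel, it follows that $L_1H_k(X)_{hom}\otimes\Q=0$ for all $k\geq 2$.

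For the integral statement in the top range, I would use that for $k\geq 2\dim X=2n$ one has $L_1H_k(X)=\pi_{k-2}(\mZ_1(X))$ with $k-2\geq 2n-2$, and the relevant cycle spaces stabilize: $\mZ_1(X)$ for a variety of dimension $n$ has $L_1H_{2n}(X)\cong\Z$ (generated by the class of $X$ itself when $n$... more precisely, $L_{n-1}H_{2n-2}$-type top cycles, so here read off from the known structure of Lawson homology in the top degree, e.g. $L_1H_{2n}(X)=\Z$ because the top cycle space is a $K(\Z,0)$ in the appropriate degree) and $L_1H_k(X)=0$ for $k>2n$ by dimension reasons on $\mZ_1(X)$. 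Since these groups are already torsion-free (resp. zero) and the rational computation forces the cycle map to be an isomorphism there, $L_1H_k(X)_{hom}=0$ integrally for $k\geq 2n$. Finally, the statement for codimension-$2$ cycles is identical after replacing $L_1$ by $L_{n-2}$ throughout and noting the resolution, the blow-up formula, and Vial's splitting all preserve the relevant bidegree; equivalently one dualizes via the fact that on smooth projective $X$ codimension $2$ means dimension $n-2$.

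The main obstacle I expect is the passage from the rational vanishing to the integral vanishing and the precise identification $L_1H_{2n}(X)=\Z$, since the direct-summand arguments of Vial and \cite{Hu-Li} are only stated rationally; handling torsion requires instead a direct analysis of the topology of $\mZ_1(X)$ in the stable range (via, e.g., the Dold–Thom-type description of cycle spaces of dimension close to $\dim X$, or Friedlander's computation for cycles of codimension one applied after a hyperplane-section reduction), which is why the sharp integral conclusion is only claimed for $k\geq 2\dim X$ rather than all $k$.
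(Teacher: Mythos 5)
Your treatment of the rational statement is sound and close in spirit to the paper's: the paper simply invokes \cite[Prop.~6.6]{Hu-Li} (which packages exactly the resolution-of-indeterminacy/dominant-map comparison you spell out), whereas you re-derive it from the blow-up formula plus Vial's splitting. Either route works; yours carries the mild extra burden of checking that the motivic splitting is compatible with $\Phi_{1,k}$, which is the same compatibility the paper itself uses in the proof of Proposition \ref{Prop6.2}.

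The integral statement for $k\geq 2\dim X$ is where your argument has a genuine gap, and you half-acknowledge this in your final paragraph. Your proposed route is to read off $L_1H_{2n}(X)=\Z$ and $L_1H_k(X)=0$ for $k>2n$ from the ``known structure of the cycle space in top degree,'' but that is essentially the conclusion being asserted rather than derived, and the fallbacks you mention (Friedlander's codimension-one computation after a hyperplane-section reduction, a Dold--Thom-type description of $\mZ_1(X)$) do not apply to $1$-cycles on an $n$-fold. The mechanism the paper actually uses is different and is the real content of the ``furthermore'' clause: (i) a transfer argument --- resolving $f:\P^n\dashrightarrow X$ to a generically finite surjection $\widetilde{f}:\widetilde{\P^n}\to X$ of degree $d$, one has $\widetilde{f}_*\circ\widetilde{f}^{!}=d\cdot\mathrm{id}$, and since $L_1H_k(\widetilde{\P^n})_{hom}=0$ \emph{integrally} (by the blow-up formula the extra summands are singular homology groups of the centers, on which $\Phi$ is injective), every $\alpha\in L_1H_k(X)_{hom}$ satisfies $d\alpha=0$, a uniform torsion bound; (ii) Voineagu's result \cite[Prop.~3.1]{Voineagu} that $L_1H_k(X)_{hom}$ is divisible for $k\geq 2\dim X$. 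A divisible group annihilated by a fixed positive integer is zero. Without the divisibility input (or some substitute for it), rational vanishing plus torsionness alone cannot rule out groups such as $\Q/\Z$, so your sketch does not close the integral case.
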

\begin{proof}
It has been shown in \cite[Prop. 6.6]{Hu-Li} that $L_1H_k(X)_{hom} \otimes \mb{Q} = 0$ since that $L_1H_k(\P^n)_{hom} \otimes  \mb{Q}=0$.  Therefore $L_1H_k(X)_{hom}$
must be torsion elements.  Since $f$ is a finite map, the degree $d$ in a positive integer. Hence the element $\alpha\in L_1H_k(X)_{hom}$ satisfies that
$d\alpha=0$. Since  $L_1H_k(X)_{hom}$ is divisible for $k\geq 2\dim X$ (see \cite[Prop. 3.1]{Voineagu}), we get $\alpha=0$.
The proof for codimension 2 cycles is similar.
\end{proof}

\begin{remark}
This is a generalization of a result  in \cite{Hu-Li}, where either the dimension of $X$ is not more than four or the group rational coefficient. A different method using
the decomposition of diagonal can be found in  \cite{Peters} and \cite{Voineagu}.
\end{remark}

Given a smooth projective variety $Y$ of dimension $n-2$ and a point $e\in Y$, we put
$\textbf{p}_0=e\times Y$ and $\textbf{p}_2=Y\times e$, then take
$\textbf{p}_1=\Delta_Y-\textbf{p}_0-\textbf{p}_2$ where $\Delta_Y$
is the diagonal in $Y\times Y$. Then we have $h(Y)=h(pt)\oplus
\mL\oplus Y^+=1\oplus
\mL\oplus Y^+$, where $\mL=h(pt)(-1)$ is the Lefschetz motive and
$Y^+=(Y,id-\textbf{p}_0-\textbf{p}_2)$.

\begin{corollary}\label{Coro7.4}
Let $X$ be a unirational variety of $\dim X=n$. Then the motive $h(X)$ can be written as
$$
h(X)=1\oplus a\mL\oplus U\otimes \mL \oplus a\mL^{n-1}\oplus \mL^{n},
$$
where $U$ a direct summand of a motive of the form $\oplus Y_i^+$, the $Y_i$'s being smooth projective
varieties of dimension  equal to $n-2$, $a\in \Z^+$ and $a\mL$ means the direct sum of $\mL$ for $a$ times.
\end{corollary}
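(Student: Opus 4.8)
The plan is to dominate $X$ by a blow-up of $\P^n$, use Vial's theorem to realize $h(X)$ as a direct summand of the motive of that blow-up, compute that motive explicitly, and then match the two decompositions. I may assume $n\ge 3$, the cases $n\le 2$ being degenerate. Since $X$ is unirational there is a dominant rational map $f\colon\P^n\dashrightarrow X$; by Hironaka's elimination of indeterminacy there is a smooth projective variety $W$ and a birational morphism $\sigma\colon W\to\P^n$ which is a finite composition of blow-ups along smooth connected centres of codimension $\ge 2$ (blow-ups of smooth divisors are isomorphisms and can be discarded), such that $\phi:=f\circ\sigma\colon W\to X$ is a morphism. Since $W$ is projective and $f$ dominant, $\phi$ is surjective, and $\dim W=\dim X=n$ forces $\phi$ to be generically finite; applying Theorem~\ref{Thm7.1} to $\phi$ (the direct sum there reduces to the single term $h(X)$ since $d_W=d_X$) shows that $h(X)$ is a direct summand of $h(W)$.

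Next I would put $h(W)$ into the desired shape. The blow-up formula for Chow motives, iterated over the blow-ups composing $\sigma$, gives $h(W)=h(\P^n)\oplus\bigoplus_{j}\bigoplus_{l=1}^{c_j-1}h(Z_j)(-l)$, where $c_j=\operatorname{codim}Z_j\ge 2$, so that $\dim Z_j=n-c_j\le n-2$. Expanding $h(\P^n)=\bigoplus_{i=0}^n\mL^i$ and $h(Z_j)=\mathbf 1\oplus Z_j^+\oplus\mL^{\dim Z_j}$ and regrouping, I expect
$$h(W)=\mathbf 1\ \oplus\ b\mL\ \oplus\ (U''\otimes\mL)\ \oplus\ b'\mL^{n-1}\ \oplus\ \mL^{n}$$
for integers $b,b'\ge 1$ and a motive $U''$ that is a direct summand of a motive of the form $\bigoplus_i Y_i^+$ with each $Y_i$ smooth projective of dimension exactly $n-2$: the unique $\mathbf 1$ and $\mL^n$ come from $h(\P^n)$; each Tate summand $\mL^i$ with $2\le i\le n-2$ is a direct summand of $(\P^{n-2})^+\otimes\mL$; and each non-Tate summand $Z_j^+\otimes\mL^{l}=(Z_j^+\otimes\mL^{l-1})\otimes\mL$ has $Z_j^+\otimes\mL^{l-1}$ a direct summand of $(Z_j\times\P^{c_j-2})^+$ — it occurs as one of the tensor-factor summands of $h(Z_j\times\P^{c_j-2})=h(Z_j)\otimes h(\P^{c_j-2})$, distinct from the two corner summands $\mathbf 1$ and $\mL^{n-2}$ (equivalently, its weights lie in $[1,2n-5]\subset(0,2(n-2))$).

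In parallel, and unconditionally, I would peel the extremal parts off $h(X)$ itself. Unirationality forces $h^{p,0}(X)=0$ for $p\ge 1$, so $\operatorname{Pic}^0(X)=0$ and $H^2(X,\Q)=\operatorname{NS}(X)_\Q$ has rank $a:=b_2(X)$; by Murre's, respectively Kahn--Murre--Nori's, construction of the Chow--K\"unneth projectors $\pi_0$, $\pi_1=0$, $\pi_2$ and their transposes — with $h^2(X)\cong\operatorname{NS}(X)_\Q\otimes\mL\cong a\mL$ since $h^{2,0}(X)=0$ — one gets $h(X)=\mathbf 1\oplus a\mL\oplus h(X)'\oplus a\mL^{n-1}\oplus\mL^{n}$ with $h(X)'=(X,\ \Delta_X-\pi_0-\pi_2-\pi_{2n-2}-\pi_{2n})$. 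Now $h(X)'$ is a direct summand of $h(X)$, hence of $h(W)=(\mathbf 1\oplus b\mL)\oplus(b'\mL^{n-1}\oplus\mL^{n})\oplus(U''\otimes\mL)$; writing $\operatorname{id}_{h(X)'}=rs$ and decomposing $r$ and $s$ along these three blocks, the two Tate blocks contribute nothing once one knows $\operatorname{Hom}(\mathbf 1\oplus b\mL,\ h(X)')=0$ and $\operatorname{Hom}(h(X)',\ b'\mL^{n-1}\oplus\mL^{n})=0$. The first holds because $\operatorname{Hom}(\mathbf 1,h(X))=\Ch^0(X)_\Q=\Q$ and $\operatorname{Hom}(\mL,h(X))=\Ch^1(X)_\Q=\operatorname{Pic}(X)_\Q=\operatorname{NS}(X)_\Q=\Q^{a}$ (here unirationality is used, via $\operatorname{Pic}^0(X)=0$ and $\rho(X)=b_2(X)$), and since $\operatorname{Hom}(\mL^i,\mL^j)=0$ for $i\ne j$ these identifications are carried entirely by the summands $\mathbf 1$ and $a\mL$ of $h(X)$; the second follows from the first by Poincar\'e duality for motives ($(h(X)')^{\vee}\cong h(X)'(n)$ and $(\mL^{j})^{\vee}\cong\mL^{-j}$). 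Hence $\operatorname{id}_{h(X)'}=r_{U}s_{U}$, so $h(X)'$ is a direct summand of $U''\otimes\mL$; as $(-)\otimes\mL$ is an autoequivalence of the category of Chow motives, $h(X)'=U\otimes\mL$ with $U$ a direct summand of $\bigoplus_i Y_i^+$, $\dim Y_i=n-2$. Substituting back yields $h(X)=\mathbf 1\oplus a\mL\oplus(U\otimes\mL)\oplus a\mL^{n-1}\oplus\mL^{n}$, which is the assertion.

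The only genuine difficulty is this matching step: Chow motives do not form a semisimple category, so $h(X)'$ cannot be ``split off'' from the surrounding Tate motives by a formal argument — one has to verify the vanishing of the displayed $\operatorname{Hom}$-groups, which is exactly where unirationality of $X$ (and not merely triviality of $\Ch_0(X)$) enters. The remaining ingredients — Hironaka resolution with centres of codimension $\ge 2$, the blow-up formula for motives, Vial's Theorem~\ref{Thm7.1}, and the partial Chow--K\"unneth decomposition available whenever $h^{1,0}=h^{2,0}=0$ — are standard, and the low-dimensional cases $n\le 2$ are handled separately.
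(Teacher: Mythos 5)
Your proposal is correct, and its skeleton coincides with the paper's: resolve the indeterminacy of $\P^n\dashrightarrow X$ by blow-ups along centres of codimension $\geq 2$, apply Vial's Theorem~\ref{Thm7.1} to the resulting surjection $W\to X$ to realize $h(X)$ as a direct summand of $h(W)$, and regroup $h(W)$ via the blow-up formula into the shape $1\oplus b\mL\oplus(\oplus_i Y_i^+)\otimes\mL\oplus b\mL^{n-1}\oplus\mL^n$ with $\dim Y_i=n-2$ (your device of absorbing $Z_j^+\otimes\mL^{l}$ and the middle Tate pieces $\mL^2,\dots,\mL^{n-2}$ into summands of $(Z_j\times\P^{c_j-2})^+\otimes\mL$ and $(\P^{n-2})^+\otimes\mL$ is exactly the paper's remark that centres of dimension $<n-2$ contribute $h(Y)(-1)$ with $\dim Y=n-2$, and it also accounts for the Tate summands of $h(\P^n)$ itself, which the paper leaves implicit). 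Where you genuinely diverge is the last step. The paper passes from ``$h(X)$ is a direct summand of $h(W)$'' to the displayed decomposition of $h(X)$ with a bare ``Hence'', tacitly treating the decomposition of $h(W)$ as if it were inherited by arbitrary direct summands; since Chow motives are not semisimple and Krull--Schmidt is not available, this is not formal. You correctly identify this as the real content and supply an argument: first split $h(X)=1\oplus a\mL\oplus h(X)'\oplus a\mL^{n-1}\oplus\mL^n$ unconditionally via the partial Chow--K\"unneth projectors available because $h^{1,0}=h^{2,0}=0$ and $\mathrm{Pic}^0(X)=0$, then show $h(X)'$ lands in the block $U''\otimes\mL$ by computing $\operatorname{Hom}(1,h(X))=\Ch^0(X)_\Q$, $\operatorname{Hom}(\mL,h(X))=\Ch^1(X)_\Q=\mathrm{NS}(X)_\Q$ and their duals, and checking these are exhausted by the extremal summands. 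This buys a complete proof of the ``hence'' and, as a bonus, identifies $a$ as $\rho(X)=b_2(X)$ and gives $a=a'$ directly from the transposed projectors rather than by an appeal to Poincar\'e duality at the end. Two minor points to tidy up: make explicit that the orthogonalized projectors $\pi_2^{\mathrm{alg}}=\sum D_i\times D_i'$ exist (non-degeneracy of the pairing between divisor classes and curve classes modulo homological equivalence) and act as the identity on $\mathrm{NS}(X)_\Q$, which is what makes $\operatorname{Hom}(\mL,h(X)')=0$; and state how you read the formula for $n\leq 2$, where $\mL^{n-1}$ collides with $1$ or $\mL$ and the statement must be interpreted separately, as you note.
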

\begin{proof}
Since $X$ is unirational of dimension $n$, there exists a dominant rational map $\P^n\dashrightarrow X$. By  a sequence of blow ups along
codimension at least 2 smooth subvarieties, we can obtain a finite surjective morphism $\widetilde{\P}^n\to X$. By Theorem \ref{Thm7.1}
we obtain that $h(X)$ is a direct summand of  $h(\widetilde{\P}^n)$, which by the blow up formula is of the form $h(\P^n)\oplus (\oplus_i  h(Y_i)(-1))$. Note
that if the dimension of the blow up center is less than $n-2$, the additional part generated by the blowup can also be written in the form $h(Y)(-1)$, where $Y$ is smooth projective and $\dim Y=n-2$.
Since $ h(Y_i)(-1)=h(Y_i)\otimes \mL=(1\oplus \mL^{n-2}\oplus Y_i^+)\otimes \mL$, $h(\widetilde{\P}^n)=1\oplus b\mL\oplus (\oplus_i  h(Y_i^+))\otimes \mL \oplus b\mL^{n-1}\oplus \mL^{n}$ for
some $b\in \Z^+$. Hence $h(X)=1\oplus a\mL\oplus U\otimes \mL \oplus a'\mL^{n-1}\oplus \mL^{n}$,  where $U$ a direct summand of a motive of the form $\oplus_i Y_i^+$ and $a, a'\in \Z^+$.
By Poincar\'{e} duality, $a=a'$. This completes the proof of the corollary.
\end{proof}

\begin{remark}
By Corollary \ref{Coro7.4}, we observe that there is no big difference between the rationality and unirationality in the sense of Chow motives. More precisely, we are not able to
determine whether a unirational variety $X$ is rational or not through computing their  invariants which are realizations of Chow motives,
such as its singular homology group with rational coefficients, Chow groups with rational coefficients, Lawson homology groups with rational coefficients.
That is, there exist two unirational varieties $X, Y$ such that $h(X)\cong h(Y)$, where $X$ is  rational but $Y$ is not.
\end{remark}

\begin{corollary}
Let $X$ be a unirational variety of $\dim X=n$. Then
$T_1H_k(X,\Q) = G_1H_k(X,\Q)$ for all $k\geq 2$ and $T_{n-2}H_k(X,\Q) = G_{n-2}H_k(X,\Q)$ for $k\geq 2(n-2)$.
\end{corollary}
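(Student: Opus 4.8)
The plan is to read off the result directly from the motivic decomposition of a unirational variety given in Corollary~\ref{Coro7.4} together with the behaviour of the topological and geometric filtrations under direct sums of Chow motives. Recall from Corollary~\ref{Coro7.4} that for a unirational $X$ of dimension $n$ one has
$$
h(X)=1\oplus a\mL\oplus U\otimes \mL \oplus a\mL^{n-1}\oplus \mL^{n},
$$
where $U$ is a direct summand of $\oplus_i Y_i^+$ with each $Y_i$ smooth projective of dimension $n-2$. Since both Lawson homology and singular homology are additive on Chow motives (as used in the proof of Proposition~\ref{Prop6.2}, via Theorem~4.7 of \cite{Hu-Li}), and the cycle class map $\Phi_{p,k}$ respects this decomposition, the equality ``$T_pH_k(X,\Q)=G_pH_k(X,\Q)$'' for a fixed pair $(p,k)$ is inherited from the corresponding equalities on each summand of $h(X)$, provided one also checks that the geometric filtration $G_\bullet$ is compatible with the motivic summands. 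This last compatibility is exactly the content of the argument in Proposition~\ref{Prop6.2}, so I would invoke it.

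First I would treat the case $p=1$, $k\geq 2$. The summands $1$, $a\mL$, $a\mL^{n-1}$, $\mL^n$ contribute only classes in degrees $0$, $2$, $2n-2$, $2n$ that are (Tate) algebraic, for which $T_1=G_1=0$ or the statement is trivial by weak Lefschetz and the Dold--Thom computation; concretely, Friedlander's computation of Lawson homology for divisors and $0$-cycles (cited already in the proof of Theorem~\ref{Thm3.6}) handles these. The essential summand is $U\otimes\mL$. Here I would use Proposition~\ref{prop41}: it gives $L_1H_k(X)_{hom}\otimes\Q=0$, i.e. $\Phi_{1,k}$ is injective after tensoring with $\Q$, hence $T_1H_k(X,\Q)=L_1H_k(X,\Q)$. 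Combined with the identity $L_1H_k(X)_\Q=\bigoplus L_{1-i}H_{k-2i}(\text{summands})$ and the fact that on $U\otimes\mL$ the relevant Lawson group is $L_0H_{k-2}(U)_\Q\cong H_{k-2}(U)_\Q$ (Dold--Thom for $0$-cycles on the $Y_i$'s, since $U$ is cut out of $\oplus Y_i^+$), one gets that $T_1H_k(X,\Q)$ fills out the whole $U\otimes\mL$-part of $H_k(X,\Q)$. On the other hand $G_1H_k(X,\Q)$ certainly contains this part, being generated by images of morphisms from varieties of dimension $\leq k-1$, in particular from the $Y_i$ (of dimension $n-2$) when $k-2\le n-2$ and from dominant images of $\P^{k-1}$-type subvarieties otherwise; hence equality.

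For the case $p=n-2$, $k\geq 2(n-2)$, I would argue dually. Proposition~\ref{prop41} also asserts the analogous vanishing for codimension-$2$ cycles, i.e. $L_{n-2}H_k(X)_{hom}\otimes\Q=0$ for $k\geq 2(n-2)$, so again $T_{n-2}H_k(X,\Q)=L_{n-2}H_k(X,\Q)$, and the motivic decomposition reduces the computation to the summands; the Tate summands contribute nothing new and the $U\otimes\mL$ summand is handled by the isomorphism $L_{n-2}H_k(U\otimes\mL)_\Q=L_{n-3}H_{k-2}(U)_\Q$ together with Friedlander's divisor computation applied to the $(n-2)$-dimensional $Y_i$, where codimension-$2$ cycles on $X$ correspond to divisors on the $Y_i$. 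Finally $G_{n-2}H_k(X,\Q)$ again visibly contains the relevant classes. I expect the main obstacle to be bookkeeping: one must verify carefully that the geometric filtration genuinely respects the motivic summand $U\otimes\mL$ (which requires that the projectors realizing $U$ and the shift by $\mL$ can be represented by correspondences supported on products of the $Y_i$ with $X$, so that images of morphisms from low-dimensional varieties suffice), and that the range constraints $k\geq 2$ resp. $k\geq 2(n-2)$ are exactly what is needed to place the divisor/$0$-cycle computations of Friedlander and Dold--Thom in effect on the $Y_i$. Once these compatibilities are in place, the corollary follows by assembling the summand-wise equalities.
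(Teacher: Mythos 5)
Your route through the explicit motivic decomposition of Corollary \ref{Coro7.4} is genuinely different from the paper's, which is much shorter: resolve the dominant rational map to a finite surjective morphism $\widetilde{\P}^n\to X$ from a blow-up of $\P^n$, note that the statements ``$T_1H_k=G_1H_k$'' and ``$T_{n-2}H_k=G_{n-2}H_k$'' are birational invariants of smooth projective varieties by \cite{Hu} (so they hold for $\widetilde{\P}^n$ because they hold for $\P^n$), and then apply Proposition \ref{Prop6.2}. Your summand-by-summand computation, where it works, proves something stronger (namely that $\Phi_{1,k}$ is rationally surjective for every $k\geq 2$), but as written it has two problems. The first is logical: at the end of each case you argue that $G_pH_k(X,\Q)$ \emph{contains} the part you have identified with $T_pH_k(X,\Q)$ and conclude ``hence equality''; but $T_p\subseteq G_p$ is automatic from \eqref{eq20}, and the inclusion you actually need is $G_p\subseteq T_p$. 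In your scheme this only follows when you have genuinely shown $T_pH_k(X,\Q)=H_k(X,\Q)$, so that $G_p$ is squeezed between; you should say this explicitly rather than the converse.

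The second problem is a genuine gap: that surjectivity fails at the boundary $k=2(n-2)$ of the second statement. There the $U\otimes\mL$ summand contributes $L_{n-3}H_{2(n-2)-2}(Y_i)_\Q$, i.e.\ divisors on the $(n-2)$-folds $Y_i$ in the bottom degree $2\dim Y_i-2$, and Friedlander's computation \cite{Friedlander1} gives ${\rm NS}(Y_i)\otimes\Q$ there, not all of $H_{2\dim Y_i-2}(Y_i,\Q)$. Concretely, a smooth cubic fourfold $X$ is unirational with $h^{3,1}(X)=1$, so $T_2H_4(X,\Q)\subsetneq H_4(X,\Q)$ and your ``fills out the whole summand'' step is false for $(p,k)=(n-2,2(n-2))$. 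That case has to be handled separately by the identity $T_pH_{2p}(X,\Q)=C_pH_{2p}(X,\Q)=G_pH_{2p}(X,\Q)$, both sides being the span of the classes of algebraic cycles of the relevant dimension (\cite[\S 7]{Friedlander-Mazur}). Finally, your deferral of the compatibility of the geometric filtration with motivic summands to Proposition \ref{Prop6.2} is legitimate, but note that this compatibility is precisely where all the difficulty sits, and the paper's argument (birational invariance plus Proposition \ref{Prop6.2}) exploits it without ever needing the explicit shape of $h(X)$.
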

\begin{proof} Since $X$ is unirational variety of $\dim X=n$, there is a finite surjective morphism $\widetilde{\P}^n\to X$, where $\widetilde{\P}^n$ is
a sequence of blow  ups along codimension at least 2 smooth subvarieties. Since the statement ``$T_1H_k(Y,\Q) = G_1H_k(Y,\Q)$" is a birational statement for
a smooth projective variety $Y$ (see \cite{Hu}), we have ``$T_1H_k(\widetilde{\P}^n,\Q) = G_1H_k(\widetilde{\P}^n,\Q)$". Now the corollary follows from Proposition
\ref{Prop6.2}.  The proof of the statement $T_{n-2}H_k(X,\Q) = G_{n-2}H_k(X,\Q)$ is similar. This completes the proof of the corollary.
\end{proof}


Recall that a smooth projective variety $X$ of dimension $n$ is called \emph{uniruled},  that is, a there is a smooth projective variety $Y$ of dimension $n-1$
such that $f: \mb{P}^1\times Y \dashrightarrow X$ is a dominant rational map.
\begin{proposition}\label{Prop4.6}
Let $X$ be a smooth uniruled threefold. Then we have $$L_pH_k(X)_{hom} \otimes \mb{Q} = 0$$ for any integer $k\geq 2p\geq 0$.
 Furthermore $L_pH_6(X)=\Z$ and $L_pH_k(X)= 0$ for all $0\leq p\leq 3$ and $k > 6$.
\end{proposition}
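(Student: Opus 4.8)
The plan is to resolve the rational map defining uniruledness, compute the Lawson homology of the resolution by dévissage into surfaces, curves and points, and transfer the conclusions back to $X$ along a generically finite correspondence. First I would use the definition of uniruledness to fix a smooth projective surface $Y$ and a dominant rational map $\P^1\times Y\dashrightarrow X$, and resolve its indeterminacy (Hironaka): this produces a smooth projective threefold $W$, a birational morphism $\sigma\colon W\to\P^1\times Y$ that is a composite of blow-ups along smooth centres of dimension $\le 1$ (a smooth blow-up centre in a threefold has codimension $\ge 2$), and a surjective morphism $f\colon W\to X$. Since $\dim W=\dim X=3$, $f$ is generically finite of some degree $d\ge 1$. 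The graph $\Gamma_f\subset W\times X$ and its transpose ${}^t\Gamma_f\subset X\times W$ are algebraic correspondences between smooth projective varieties; they act on Lawson homology compatibly with the cycle class maps $\Phi_{\ast,\ast}$ (the intersection calculus of \cite{Friedlander-Gabber}), and a computation of the composite of correspondences gives $\Gamma_f\circ{}^t\Gamma_f=d\cdot\Delta_X$ in $\Ch(X\times X)$. Hence $(\Gamma_f)_\ast\circ({}^t\Gamma_f)_\ast=d\cdot\mathrm{id}$ on $L_pH_k(X)$ and on $H_k(X)$, with $({}^t\Gamma_f)_\ast$ intertwining the two. (Rationally this is a weak form of Vial's splitting, Theorem~\ref{Thm7.1}, but we shall need the integral information.)

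Next I would compute $L_pH_k(W)$. Applying the blow-up formula for Lawson homology repeatedly, together with the projective bundle formula for $\P^1\times Y\to Y$ — equivalently, decomposing the Chow motive $h(W)$ via the blow-up and projective-bundle formulas and invoking \cite{Hu-Li} — one finds that $L_pH_k(W)$ is a finite direct sum of groups $L_qH_m(T)$ with $T$ equal to $Y$, to a smooth projective curve, or to a point, and that this decomposition is compatible with $\Phi$ and with the corresponding K\"unneth/blow-up decomposition of $H_k(W)$. For each building block the cycle class map is understood: for a point it is trivial; for a smooth curve $C$ the space $\mZ_1(C)$ is discrete and Dold--Thom gives $L_qH_m(C)\cong H_m(C)$ for all $m$; for the surface $Y$ the map $\Phi$ is an isomorphism by Dold--Thom ($q=0$), because $\Griff_1(Y)=0$ ($q=1$, $m=2$: homological and algebraic equivalence of divisors on a surface coincide), by Friedlander's computation of the Lawson homology of codimension-one cycles \cite{Friedlander1} ($q=1$, $m\in\{3,4\}$), and because $\mZ_2(Y)$ is discrete ($q=2$); moreover each $L_qH_m(T)$ vanishes once $m>2\dim T$. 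Consequently $L_pH_k(W)_{hom}=0$ for all $k\ge 2p\ge 0$; $L_pH_k(W)=0$ for $k>6$; and $L_pH_6(W)=\Z$ (for $p\ge 1$ the only surviving summand is $L_{p-1}H_4(Y)\cong H_4(Y)=\Z$, and for $p=0$ one has $L_0H_6(W)=H_6(W)=\Z$ directly from Dold--Thom).

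Finally I would transfer these facts to $X$. Since $({}^t\Gamma_f)_\ast$ carries $L_pH_k(X)_{hom}$ into $L_pH_k(W)_{hom}=0$, the identity $d\cdot\mathrm{id}=(\Gamma_f)_\ast\circ({}^t\Gamma_f)_\ast$ shows $L_pH_k(X)_{hom}$ is killed by $d$, hence $L_pH_k(X)_{hom}\otimes\Q=0$ for all $k\ge 2p\ge 0$. If $k>6$ then $H_k(X)=0$, so $L_pH_k(X)=L_pH_k(X)_{hom}$ is a $d$-torsion group which is also divisible by \cite[Prop.~3.1]{Voineagu} (as $k\ge 2\dim X=6$), hence trivial; this gives $L_pH_k(X)=0$ for $k>6$. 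If $k=6$ the same reasoning shows $L_pH_6(X)_{hom}$ is $d$-torsion and divisible, hence $0$, so $\Phi_{p,6}$ embeds $L_pH_6(X)$ into $H_6(X)=\Z$; it is also onto, because $\Phi_{3,6}$ factors as $\Phi_{p,6}\circ s^{3-p}$ through the $s$-map $L_3H_6(X)\to L_pH_6(X)$ and $\Phi_{3,6}$ sends the fundamental $3$-cycle to the generator of $H_6(X)$. Therefore $L_pH_6(X)\cong\Z$ for $0\le p\le 3$, completing the statement; the codimension-two case follows verbatim with cycle dimensions shifted.

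The step I expect to cost the most care is the second one: I need the blow-up (and projective bundle) formula for Lawson homology available \emph{integrally} and compatibly with $\Phi$, where one must be careful about conventions (negative cycle indices, the precise shape of the decomposition), and decide whether to cite the motivic version \cite{Hu-Li} or argue directly with the localization and bundle formulas for cycle spaces. Everything else is either formal (the correspondence identity) or a repackaging of known computations for surfaces, curves and points.
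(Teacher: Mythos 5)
Your proof is correct and follows essentially the same route as the paper: resolve the indeterminacy to get a generically finite surjection $W\to X$ from a blow-up of $\P^1\times Y$, reduce $L_pH_k(W)_{hom}$ to the surface $Y$ via the blow-up and projective bundle formulas, and transfer back to $X$ by the degree-$d$ correspondence identity together with Voineagu's divisibility in the range $k\ge 2\dim X$. The only cosmetic difference is that the paper quotes the birational invariance of $L_pH_k(\cdot)_{hom}$ from \cite{Hu} where you rederive it from the blow-up formula, and it delegates the transfer step to the argument of Proposition \ref{prop41}, which you spell out.
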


\begin{proof}
Since $ \mb{P}^1\times Y \dashrightarrow X$ is a dominant rational map, we get a surjective morphism $ \widetilde{\mb{P}^1\times Y} \to X$ by a sequence of blow ups
$\widetilde{\mb{P}^1\times Y} \to  {\mb{P}^1\times Y}$. Since $$L_pH_k(\widetilde{\mb{P}^1\times Y})_{hom}\cong L_pH_k({\mb{P}^1\times Y})_{hom}$$ (see \cite{Hu}) and
 $L_pH_k({\mb{P}^1\times Y})\cong L_{p-1}H_{k-2}(Y)\oplus  L_pH_{k}(Y)$ (see \cite{Friedlander-Gabber}), we have
 $$L_pH_k(\widetilde{\mb{P}^1\times Y})_{hom}\cong L_{p-1}H_{k-2}(Y)_{hom}\oplus  L_pH_{k}(Y)_{hom}=0$$ since $Y$ is a projective surface (see \cite{Friedlander1}).
The last statement follows from the same reason as that in Proposition \ref{prop41}.
\end{proof}

Now we turn to the Friedlander-Mazur conjecture for uniruled threefolds.
\begin{corollary}
Let $X$ be a uniruled threefold.  Then the Friedlander-Mazur conjecture holds for $X$.
That is, $T_pH_k(X,\Q) = G_pH_k(X,\Q)$ for all $k\geq 2p$.
\end{corollary}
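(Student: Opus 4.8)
The plan is to deduce the Friedlander-Mazur conjecture for a uniruled threefold $X$ from the already-established case of threefolds satisfying the Grothendieck standard conjecture B (Theorem \ref{Thm3.5}), combined with the surjectivity input from Proposition \ref{Prop4.6}. Recall from \cite[Remark 1.13]{Hu} (as used in the proof of Theorem \ref{Thm3.5}) that for any smooth projective threefold the equality ``$T_pH_k(X,\Q)=G_pH_k(X,\Q)$'' holds for \emph{all} $(p,k)$ with $k\geq 2p$ except possibly for $(p,k)=(1,4)$. So the entire problem reduces to proving ``$T_1H_4(X,\Q)=G_1H_4(X,\Q)$'' for $X$ uniruled.

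First I would observe that $G_1H_4(X,\Q)=H_4(X,\Q)$ trivially from the definition of the geometric filtration (any smooth projective threefold $X$ itself is a variety of dimension $3\leq 4-1$ mapping to $X$), so the claim is exactly the surjectivity of $\Phi_{1,4}\colon L_1H_4(X)_\Q\to H_4(X,\Q)$, equivalently $L_1H_4(X)_{hom}\otimes\Q$ being the whole kernel — that is, $T_1H_4(X,\Q)=H_4(X,\Q)$. Now Proposition \ref{Prop4.6} tells us that $L_pH_k(X)_{hom}\otimes\Q=0$ for all $k\geq 2p\geq 0$; in particular $L_1H_4(X)_{hom}\otimes\Q=0$, which says precisely that $\Phi_{1,4}\colon L_1H_4(X)_\Q\to H_4(X,\Q)$ is \emph{injective}. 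But to conclude surjectivity I would instead argue directly: by the construction in Proposition \ref{Prop4.6}, there is a finite surjective morphism $\widetilde{\mb{P}^1\times Y}\to X$ with $Y$ a smooth projective surface, and $L_1H_4(\widetilde{\mb{P}^1\times Y})_\Q\cong L_0H_2(Y)_\Q\oplus L_1H_4(Y)_\Q$ by the projective bundle / blow-up formulas of \cite{Friedlander-Gabber}, \cite{Hu}. Since $L_1H_4(Y)_\Q\cong H_4(Y,\Q)$ and $L_0H_2(Y)_\Q\cong H_2(Y,\Q)$ by Dold-Thom and Friedlander's computation, the cycle class map $\Phi_{1,4}$ is an isomorphism onto $H_4$ for $\mb{P}^1\times Y$, hence (being a birational invariant, \cite{Hu}) for $\widetilde{\mb{P}^1\times Y}$; then Proposition \ref{Prop6.2} — the Friedlander-Mazur conjecture descends along surjective morphisms — transports the full conjecture, including the case $(1,4)$, from $\widetilde{\mb{P}^1\times Y}$ down to $X$.

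Alternatively, and perhaps more cleanly, I would just invoke Proposition \ref{Prop6.2} in one stroke: $\widetilde{\mb{P}^1\times Y}$ is a product (up to birational modification, which preserves ``$T_1H_4=G_1H_4$'' by \cite{Hu}) of $\mb{P}^1$ with a surface, and the Friedlander-Mazur conjecture for $\mb{P}^1\times Y$ follows because the product of $\mb{P}^1$ (a curve) with the surface $Y$ is covered by Corollary \ref{Cor4.3} (product of a smooth projective curve and a smooth projective surface). Thus the conjecture holds for $\widetilde{\mb{P}^1\times Y}$, and since $\widetilde{\mb{P}^1\times Y}\to X$ is surjective, Proposition \ref{Prop6.2} gives it for $X$.

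The only subtle point — the part I would be most careful about — is the interplay between \emph{birational} modification and \emph{surjective} morphism: one must know that passing from $\mb{P}^1\times Y$ to its blow-up $\widetilde{\mb{P}^1\times Y}$ does not destroy the conjecture. For the single remaining case $(p,k)=(1,4)$ this is exactly the statement that ``$T_1H_4(-,\Q)=G_1H_4(-,\Q)$'' is a birational invariant of smooth projective threefolds, which is \cite{Hu}; for all other $(p,k)$ the equality holds unconditionally on any smooth projective threefold. With that in hand the argument is a short composition of Proposition \ref{Prop4.6}, Corollary \ref{Cor4.3} (or Theorem \ref{Thm3.5}), and Proposition \ref{Prop6.2}, and there is no genuine obstacle beyond bookkeeping.
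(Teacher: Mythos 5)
Your second, ``cleaner'' route is precisely the paper's own proof: establish the conjecture for $\mb{P}^1\times Y$ via Corollary \ref{Cor4.3}, transfer it to the blow-up $\widetilde{\mb{P}^1\times Y}$ by the birational invariance of the statement in low dimension (\cite{Hu}), and then descend along the finite surjection $\widetilde{\mb{P}^1\times Y}\to X$ using Proposition \ref{Prop6.2}. Your first route is only a mild repackaging of the same ingredients (and you correctly catch that Proposition \ref{Prop4.6} gives injectivity of $\Phi_{1,4}$ rather than the needed surjectivity), so the proposal is correct and essentially identical in approach to the paper.
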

\begin{proof}
Since $X$ is a uniruled threefold, there exists a smooth projective surface $Y$ such that
$ \mb{P}^1\times Y \dashrightarrow X$ is a dominant rational map.
By Corollary \ref{Cor4.3}, $T_pH_k(\mb{P}^1\times Y,\Q) = G_pH_k(\mb{P}^1\times Y,\Q)$.
 As in Proposition \ref{Prop4.6},
we can find a finite surjective morphism $\widetilde{\mb{P}^1\times Y} \to X$, where
$\widetilde{\mb{P}^1\times Y} \to  {\mb{P}^1\times Y}$ is a sequence of blow ups at smooth centers.
Since the  the Friedlander-Mazur conjecture holds or not is a birational invariant statement for
smooth projective varieties of dimension less than or equal to four (see \cite{Hu}), we have
$T_pH_k(\widetilde{\mb{P}^1\times Y},\Q) = G_pH_k(\widetilde{\mb{P}^1\times Y},\Q)$. Now for all $k\geq 2p$,
 the equality  $T_pH_k(X,\Q) = G_pH_k(X,\Q)$   follows from Proposition \ref{Prop6.2}.
\end{proof}

Now we introduce a notation ``unirational map". A rational map $f:X\dashrightarrow Y$ between two irreducible projective varieties
$X, Y$ of the same dimension is called  a \emph{unirational map} if $f$ is a dominant map. Then $Y$ is called a \emph{uni-X variety}.
For convenience, $X$ is always chosen as a smooth projective variety.
Note that it coincides with the notations of  unirational (resp. uniruled) variety.  The following result
is a summary of the result in this section.

\begin{proposition}
Let $Y$ be a smooth uni-$X$ variety of dimension $n$. If $$L_1H_k(X)_{hom} \otimes \mb{Q} = 0$$ for any integer $k\geq 2$, so is
for $Y$.  If $T_1H_k(X, \Q)=G_1H_k(X, \Q)$ holds for  $k\geq 2$, so is for $Y$.  Similarly,
if $L_{n-2}H_k(X)_{hom} \otimes \mb{Q} = 0$ for any integer $k\geq 2$, so is
for $Y$.  If $T_{n-2}H_k(X, \Q)=G_{n-2}H_k(X, \Q)$ holds for  $k\geq 2$, so is for $Y$.
\end{proposition}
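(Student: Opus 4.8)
The plan is to convert the unirational map into a genuine surjective morphism and then appeal to Proposition \ref{Prop6.2}, exactly as was done for uniruled threefolds and unirational varieties above. Since $f\colon X\dashrightarrow Y$ is a dominant rational map between smooth projective varieties of the same dimension $n$, resolving its points of indeterminacy by a sequence of blow-ups along smooth centers produces a smooth projective variety $\widetilde{X}$ of dimension $n$, a birational morphism $\pi\colon\widetilde{X}\to X$, and a morphism $g=f\circ\pi\colon\widetilde{X}\to Y$; since $f$ is dominant and $g$ is proper, $g$ is surjective.

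The first step is to move the four hypotheses from $X$ to $\widetilde{X}$; all four are birational invariants of smooth projective varieties (see \cite{Hu}). For ``$T_1H_k=G_1H_k$'' and ``$T_{n-2}H_k=G_{n-2}H_k$'' this is the birational invariance already quoted in the corollaries above. For the two vanishing statements it comes from the blow-up formula: under a blow-up with smooth center $W$ of codimension $c\ge 2$, the group $L_pH_k$ acquires, compatibly with $\Phi_{p,k}$, the extra summands $L_{p-j}H_{k-2j}(W)$ for $1\le j\le c-1$; when $p=1$ the only possibly nonzero one is $L_0H_{k-2}(W)=H_{k-2}(W)$, and when $p=n-2$ one has $\dim W\le n-2$, so the nonzero ones are Lawson homology groups of $W$ in codimension $0$ or $1$ --- in every case a group on which the cycle class map is injective, by Dold--Thom and \cite{Friedlander1}. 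Hence $L_1H_k(\widetilde{X})_{hom}$ and $L_{n-2}H_k(\widetilde{X})_{hom}$ are isomorphic to their counterparts on $X$, and all four properties hold for $\widetilde{X}$.

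The second step is to push from $\widetilde{X}$ down to $Y$ along $g$, following the proof of Proposition \ref{Prop6.2}. Because $d_{\widetilde{X}}=d_Y=n$, Theorem \ref{Thm7.1} gives that $h(Y)$ itself --- with no Tate twist --- is a direct summand of $h(\widetilde{X})$, so $L_pH_k(Y)$ is a direct summand of $L_pH_k(\widetilde{X})$ compatibly with the cycle class map, and, as argued in Proposition \ref{Prop6.2}, this decomposition is respected by singular homology and by the geometric filtration as well. It follows that $L_1H_k(Y)_{hom}\otimes\Q$ is a direct summand of $L_1H_k(\widetilde{X})_{hom}\otimes\Q=0$ and hence vanishes, that $T_1H_k(Y,\Q)=G_1H_k(Y,\Q)$ holds because it holds on $\widetilde{X}$, and likewise for the two codimension-two assertions. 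I expect the only point that is not purely formal to be the one already handled in Proposition \ref{Prop6.2}: the geometric filtration $G_\bullet$ is defined via images of the homology of subvarieties rather than motivically, so one needs that it splits compatibly with the direct-sum decomposition of the Chow motive; granting this, the argument is complete.
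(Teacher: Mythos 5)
Your proposal is correct and follows essentially the same route as the paper: resolve the indeterminacy of the dominant rational map to get a surjection $\widetilde{X}\to Y$, transfer the four hypotheses from $X$ to $\widetilde{X}$ by birational invariance (the paper cites \cite{Hu} where you unwind the blow-up formula), and then descend to $Y$ along the surjection via the motivic direct-summand mechanism of Proposition \ref{Prop6.2}. The only cosmetic difference is that for the vanishing statements the paper quotes the dimension inequality of \cite[Prop.~6.6]{Hu-Li} rather than phrasing the descent as a direct summand of $L_1H_k(\widetilde{X})_{hom}\otimes\Q$, but both rest on the same underlying fact (Theorem \ref{Thm7.1}).
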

\begin{proof}
By assumption, there is a dominant rational map $f:X\dashrightarrow Y$. By   a sequence of blow  ups along codimension at least 2 smooth subvarieties, we
get a surjective morphism $\tilde{f}:\widetilde{X}\to Y$. Since $L_1H_k(\widetilde{X})_{hom}\cong L_1H_k(X)_{hom} \otimes \mb{Q}$(see \cite{Hu}), which is $0$ by assumption.
By \cite[Prop. 6.6]{Hu-Li}, we have  $\dim_{\Q}L_1H_k(Y)_{hom,  \mb{Q}}\leq \dim_{\Q}L_1H_k(\widetilde{X})_{hom,  \mb{Q}}$. Hence we have
$L_1H_k(X)_{hom} \otimes \mb{Q} = 0$.

Since the statement ``$T_1H_k(W,\Q) = G_1H_k(W,\Q)$" is a birational statement for
a smooth projective variety $W$ (see \cite{Hu}), we have ``$T_1H_k(\widetilde{X},\Q) = G_1H_k(\widetilde{X},\Q)$". Now the statment
$T_1H_k(Y,\Q) = G_1H_k(Y,\Q)$ follows from Proposition \ref{Prop6.2} and the fact $\tilde{f}:\widetilde{X}\to Y$ is a surjective morphism.

The case of codimension 2 cycles is similar.
\end{proof}

\emph{Acknowledgements.}
The first author would like to thank Tianyuan Mathematical Center in Southwest China for their hospitality and financial support. The first author was partially supported by the National Science Foundation for Young Scientists of China (Grant No. 11901334). The second author was partially sponsored by  STF of Sichuan province, China(2015JQ0007), NSFC(11771305).

\end{document}